\newcommand{\R}{\mathbb R}
\newtheorem{proposition}{Proposition}[section]
\newtheorem{lemma}[proposition]{Lemma}
\newtheorem{theorem}[proposition]{Theorem}
\newenvironment{proof}[1][Proof]{\begin{trivlist}
\item[\hskip \labelsep {\bfseries #1}]}{\end{trivlist}}
\title{Benders decomposition for congested partial set covering location with uncertain demand}
\author{Alice Calamita$^1$ \and Ivana Ljubi\'c$^2$ \and Laura Palagi$^1$}
\date{$^1$Department of Computer, Control and Management Engineering, Sapienza University of Rome, Via Ariosto 25, 00185 Rome, Italy\\\textit{\{alice.calamita, laura.palagi\}@uniroma1.it}\\
$^2$Department of Information Systems, Decision Sciences and Statistics, ESSEC Business School of Paris, 3 Avenue Bernard Hirsch, 95000 Cergy, France\\\textit{ivana.ljubic@essec.edu}}
\begin{document}

\maketitle

\begin{abstract}
In this paper, we introduce a mixed integer quadratic formulation for the congested variant of the partial set covering location problem, which involves determining a subset of facility locations to open and efficiently allocating customers to these facilities to minimize the combined costs of facility opening and congestion while ensuring target coverage. 
To enhance the resilience of the solution against demand fluctuations, we address the case under uncertain customer demand using $\Gamma$-robustness. We formulate the deterministic problem and its robust counterpart as mixed-integer quadratic problems. We investigate the effect of the protection level in adapted instances from the literature to provide critical insights into how sensitive the planning is to the protection level. Moreover, since the size of the robust counterpart grows with the number of customers, which could be significant in real-world contexts, we propose the use of Benders decomposition to effectively reduce the number of variables by projecting out of the master problem all the variables dependent on the number of customers. 
We illustrate how to incorporate our Benders approach within a mixed-integer second-order cone programming (MISOCP) solver, addressing explicitly all the ingredients that are instrumental for its success. We discuss single-tree and multi-tree approaches and introduce a perturbation
technique to deal with the degeneracy of the Benders subproblem efficiently. Our tailored Benders approaches outperform the perspective reformulation solved using the state-of-the-art MISOCP solver Gurobi on adapted instances from the literature. 
\end{abstract}

\section{Introduction}
The partial set covering location problem (PSCLP) belongs to the class of facility location problems that incorporate a notion of coverage. Coverage is defined by a proximity measure, often the distance or the travel time, establishing whether a potential facility location can serve or cover a certain demand point. The PSCLP aims to minimize the cost of opening the facilities while ensuring the coverage of a specified (partial) amount of the total demand. It derives from the renowned set covering location problem (SCLP), which aims to locate a minimum-cost set of facilities such that all demand points are covered at least once. The PSCLP emerged as a response to the often expensive or impractical solutions produced by the SCLP, allowing for partial coverage.
Also PSCLP has some drawbacks: it does not take into account the congestion at the facilities, which may arise from a sudden increase in the customers' demand. Studying congestion is crucial as it directly impacts the performance (i.e., the quality of service) and efficiency of this type of networks, and sometimes, depending on the application, it could lead to additional cost due to diseconomies of scale (due to, e.g., the employment of additional overtime workers, the use of more expensive materials, or by neglecting or postponing equipment maintenance schedules \cite{harkness2003facility}). 

We, therefore, introduce a novel problem that we denote as the \textit{congested partial set covering location problem} (CPSCLP), consisting of choosing where to locate the facilities among a set of candidate sites, that can satisfy a target (partial) demand in such a way as to minimize the cost of facility opening and congestion. By seeking a more balanced solution, CPSCLP can prevent facilities from being overloaded, thereby minimizing diseconomies of scale and ensuring better resource allocation. Congestion in facility location problems is typically modeled through a convex quadratic term in the objective function (see, e.g., \cite{desrochers1995congested,fischetti2016benders}). This term acts similarly to a penalty function associated with capacity constraints, as it penalizes every additional unit being served by a given facility. Hence, taking into account the minimization of congestion can be seen as imposing a limited capacity on the facilities, as claimed in \cite{desrochers1995congested}.

Another key aspect to consider is that in most real-world applications, accurately estimating demand can be challenging due to its inherent variability or lack of historical data. If the estimate is not correct and the demand deviates from the expected value, it could lead to an optimal solution that is actually impracticable or infeasible. To mitigate the sensitivity of the solution with respect to changes in the problem parameters, we address the CPSCLP under the assumption of uncertain customer demand. Specifically, we propose to deal with data uncertainty using the approach known as $\Gamma$-robustness, as introduced in \cite{bertsimas2003robust}. This approach aims to create a reliable and efficient network architecture that is robust against demand changes. In particular, based on the assumption that nature is restricted in its behaviour, we protect against the case in which deviations from expected demand will occur for at most $\Gamma$ customers.

\paragraph{Literature overview}
The problem we address emerges at the intersection of two optimization problems: the PSCLP and the congested facility location problem (CFLP). Despite their practical relevance, these problems have received little attention in the scientific literature. 

The minimum cost covering problem traces back to Hakimi's work \cite{hakimi1965optimum} in 1965, where the problem of locating the minimum number of police officers so that everyone is within a given distance from an officer is introduced; Hakimi suggests a solution procedure based on enumeration.
The first integer programming formulation of the problem was proposed in \cite{toregas1971location} to solve the problem of locating emergency service facilities in a discrete space. Then, in \cite{walker1974using}, a heuristic is suggested to assign ladder trucks to fire stations and the problem is formulated as a minimum cost covering problem. 

PSCLP, specifically, was introduced by Daskin and Owen \cite{daskin1999two} in 1999. The authors propose an approach based on a Lagrangian heuristic. In 2019, an exact approach based on Benders decomposition was provided in \cite{cordeau2019benders}, and several large-scale instances were made available by the authors as a benchmark for future works. More recently, in \cite{chen2023efficient}, five customized presolving methods have been discussed to enhance the capability of employing mixed-integer programming (MIP) solvers in solving PSCLPs.

Other studies have considered partial set covering problems in different contexts than location, such as a mining application in \cite{bilal2014iterated}, and a related problem in which customers whose distance falls between a lower and an upper bound from their nearest facility are only partially covered \cite{berman2003gradual}. 

An alternative version to PSCLP is the maximal covering location problem (MCLP), that aims for the subset of facilities
maximizing the coverage while respecting a budget constraint. The MCLP was formulated for the first time by Church and ReVelle \cite{church1974maximal} in 1974 as a 0-1 linear programming and in \cite{megiddo1983maximum} was proved to be NP-hard. There are two versions of this problem in the literature, one imposing an upper bound on the number of open facilities -- whose LP relaxation could be integral for relatively small size instances (see \cite{snyder2011covering})-- and the other using a budget constraint -- whose LP relaxation usually leads to more fractional solutions. The MCLP received more attention than PSCLP. However, despite the good quality of the LP relaxation, the exact approaches proposed are few. 
Indeed, mostly heuristic and metaheuristic algorithms have been proposed for this problem. Among them, we mention: greedy procedures in \cite{church1974maximal, resende1998computing}; heuristics based on the Lagrangian relaxation of the constraints in \cite{downs1996exact,galvao2000comparison,galvao1996lagrangean,lorena2002lagrangean}; a decomposition heuristic in \cite{senne2010decomposition};   metaheuristics in \cite{revelle2008solving,atta2018solving,maximo2017intelligent,maximo2019intensification,maximo2023hybrid,atta2023improved}.
As for the exact approaches, in \cite{downs1996exact}, a branch-and-bound framework using Lagrangian relaxations to dualize the covering constraints is investigated. 
Finally, a Benders decomposition method has also been provided   for this problem in \cite{cordeau2019benders}. 

In this paper, we model congestion  through convex quadratic functions, as it was done for the congested facility location problem (CFLP). We therefore review this class of problems. The CFLP was introduced in 1995 by Desrochers et al. \cite{desrochers1995congested}, inspired by \cite{krarup1987optimal}, in which a similar formulation was proposed to illustrate a brief example involving skiers waiting for chairlifts. The work of \cite{desrochers1995congested} provides a column generation embedded in a branch-and-bound scheme, and reports optimal solutions for very small instances of the problem. 
In \cite{harkness2003facility}, different MIP formulations for the case of convex and piece-wise linear production cost functions are compared. Other contributions include the two master theses \cite{lu2010facility} and \cite{cselfun2011outer} investigating the problem, and the article \cite{fischetti2016benders}, where the authors propose a Benders decomposition method that is effective even though the subproblem is not separable. A branch and bound algorithm based on Lagrangian relaxation and subgradient optimization is suggested in \cite{christensen2021fast}. For a recent study on how off-the-shelf MIP solvers behave on CFLP instances modeled throw mixed-integer second-order cone programs, see \cite{avella2023off}. 

There are several other studies modeling facility congestion using queuing theory (some of them also considering
 uncertainty in the data incorporated in the models) that are not directly linked to our models. We therefore refer the reader to the survey \cite{boffey2007review} and the literature overviews provided in \cite{aboolian2012profit,zhang2009incorporating} for further information.

For what concerns data uncertainty, there are several contributions in stochastic or robust facility location (e.g., see the review in \cite{snyder2006facility}). However, none of them specifically addresses the PSCLP.

\paragraph{Main motivation and our contribution}
A careful look into the literature suggests a lack of contributions in problems of congested facility location and partial set covering location. Recent works \cite{cordeau2019benders,fischetti2016benders,fischetti2017redesigning} showed promising results given by exact methods based on Benders decomposition for the deterministic PSCLP and CFLP (both the capacitated and uncapacitated cases). However, no consideration is given to the inherent volatility of the parameters used to model these specific problems. Motivated by the theoretical and practical relevance, we address the robust and congested variant of PSCLP, which considers the minimization of the congestion at the facilities and the protection against the changes in demand, and investigate the use of Benders decomposition to solve large instances of this problem.

The contributions of this paper can be summarized as follows:
\begin{enumerate}
    \item we state and formulate the congested partial set covering location problem under the deterministic setting;
    \item we consider the uncertainty in the customer demand and formulate the robust counterpart of the problem leveraging the $\Gamma$-robustness theory. The demand changes affect both the congestion function and the covering constraint. We show how the nonlinear protection functions can be linearized;
    \item we consider the case of quadratic convex congestion cost functions and apply the perspective reformulation to the robust counterpart of the problem, aiming for a tighter formulation. We then develop a tailored solution approach for the resulting model based on Benders decomposition;    
    \item we illustrate how to integrate our Benders approach within a MISOCP solver, explicitly addressing all the ingredients that are instrumental for successful implementation. Although Benders decomposition is a traditional technique, our approach is innovative as it leverages the implementation of callback functions in combination with quadratic constraints, a novel option provided by modern MISOCP solvers. We assess both single-tree and multi-tree Benders approaches through a comprehensive set of (adapted) instances from existing literature, aiming to validate their efficiency and compare their performance with a state-of-the-art MISOCP solver;
    \item we test the effect of the protection level in the coverage and load (together and separately) to provide critical insights into how sensitive the planning is to the protection level, offering a more comprehensive understanding of the problem.
\end{enumerate}

The paper is structured as follows. In Section \ref{ch_formulation}, we introduce the deterministic formulation of the problem and its robust counterpart, explaining how to account for uncertainties in customer demand using the $\Gamma$-robustness approach. 
Section \ref{ch_benders} focuses on the Benders decomposition method: in this section, we provide the master and subproblem formulations, and derive the expression of the Benders cut. 
In Section \ref{ch_epsilon_technique}  we show a cut-strengthening technique to deal with the degeneracy of the Benders subproblem. 
Section \ref{ch_implementation} provides insights into the integration of Benders approach in a MISOCP solver. We describe in detail both the single-tree and multi-tree approaches. 
In Section \ref{ch_results}, we report the sensitivity analysis testing the effect of $\Gamma$ on the optimal solutions and the computational experiments comparing our tailored Benders to a state-of-the-art solver. Conclusions are given in Section \ref{ch_conclusions}.

\section{Problem Setting} \label{ch_formulation}
We are given a set $I$ of potential facility locations with opening cost $f_i \geq 0$ for $i \in I$, and a set $J$ of customer locations such that each customer location $j \in J$ is associated with a demand $d_j \geq 0$. From now on, we refer for short to facility and customer, omitting the word location.
For each customer $j$, we are also given a subset $I(j) \subseteq I$ of facilities that can cover $j$, i.e., that can fully serve the demand $d_j$. 
Similarly, for $i \in I$,
let $J(i)$  be the subset of customers that can be covered by $i$. More generally, for a subset of facilities $N \subseteq I$, let 
 $J(N) \subseteq J$ be the subset of customers that can be covered by an $i\in N$.

Given a parameter $0 < D \leq \sum_{j \in J} d_j$, we aim to identify a subset of facilities to open in order to ensure that the total served customer demand is at least $D$, while minimizing the overall costs, given by facility opening expenses and congestion costs.
To model congestion cost, we follow what is done in the literature of CFLP \cite{desrochers1995congested,fischetti2016benders,krarup1987optimal}, in which a nonlinear cost function $F_i$ is used to penalize each additional unit of demand served by a given facility $i$.
For each facility $i\in I$, let $F_{i}:\mathbb R \to \mathbb R_+ $ be a convex and non-decreasing function representing the congestion cost for each unit of served demand.

For each $i \in I$, the binary variable $y_i$ is set to one if facility $i$ is open and to zero otherwise.
For each $i \in I$ and $j \in J$, the continuous allocation variable $0\le x_{ij} \le 1$ denotes the fraction of the demand of customer $j \in J$ served by facility $i \in I$. 

\subsection{The deterministic problem formulation}
The most general deterministic CPSCLP can be formulated as a mixed-integer nonlinear program (MINLP) as follows 
\begin{align} \displaystyle
 {\min} \quad  & \sum_{i \in I} f_{i}y_{i} +\sum_{i \in I} F_{{i}}\left(  \sum_{j \in J}d_j x_{ij} \right)\label{objective_partial_set_cov}\\
 {s.t.} &\quad \sum_{i \in I} \sum_{j \in J} d_j x_{ij} \geq D \label{demand_constraint}\\
&\sum_{i \in I(j)} x_{ij} \leq 1 && j \in J \label{assignment_constraint}\\
& 0 \leq x_{ij} \leq y_i && j \in J, i \in I(j) \label{linking_constraint}\\
&y_{i} \in \{0,1\} && i \in I. \nonumber
\end{align}

The objective \eqref{objective_partial_set_cov} aims to minimize the sum of facility opening and congestion costs. For the sake of simplicity, we assign equal weights to the two conflicting objectives. However, different (non-negative) weights can be easily incorporated without altering the structure of the problem.
Constraint \eqref{demand_constraint} ensures the total customer demand served is at least $D$. Assignment constraints \eqref{assignment_constraint} make sure that the fraction of covered demand of a customer does not exceed the unit. Finally, constraints \eqref{linking_constraint} ensure that allocation to a facility is only possible if it is open.

This problem is NP-hard since it is a generalization of the traditional set covering location problem, which is NP-hard \cite{daskin1999two}.

\subsection{The robust counterpart of the problem}

 In real-world scenarios, customer demand often varies or is difficult to estimate. To capture this uncertainty, we assume that each entry $d_j$, $j \in J$ of the vector of demand $d$ is modeled as an independent, symmetric and bounded random variable (with unknown distribution) $\tilde d_j$, $j \in J$ that takes values in $[d_j - \hat d_j, d_j + \hat d_j]$. We allow the possibility that the deviations from the nominal values
  could also be zeros, i.e. that $\hat d_j=0$ for some $j \in J$.
We adopt the notion of protection introduced by Bertsimas and Sim in \cite{bertsimas2003robust} known as $\Gamma$-robustness, which assumes that only a subset of the components of $d$ will deviate from their nominal values, adversely affecting the solution. Hence, we introduce an integer number $\Gamma$, taking values in the interval $[0, |J|]$, that limits the number of demand deviations. Parameter $\Gamma$ controls the level of robustness against the solution: if $\Gamma=0$, we completely ignore the uncertainty (deterministic setting), while if $\Gamma=|J|$, we are considering all possible demand deviations (which is the most conservative strategy).

We note that the vector of demand $d$ is involved in the modeling of congestion and coverage affecting both optimality and feasibility. By protecting against the uncertainty, we mean that we are interested in finding 
an optimal solution that:
\begin{enumerate}
    \item optimizes against all scenarios under which up to $\Gamma$ demand coefficients can vary in such a way as to maximally influence the objective; the worst-case scenario is given by the largest increase of demand;
    \item is protected against all cases in which up to $\Gamma$ demand coefficients change affecting the feasibility; the worst-case scenario is given by the largest decrease of demand.
\end{enumerate}
We observe that the two worst-case realizations play against each other.

We now introduce a robust counterpart of the problem, that optimizes against the worst-case realizations under demand uncertainty and reads 
\begin{equation}\label{robust_nonconvex}
\begin{aligned}
{\min} \quad  & \sum_{i \in I} f_{i}y_{i} 
+\sum_{i \in I} F_{i}\left(  \sum_{j \in J}d_j x_{ij} +\max_{\{S: \ S \subseteq  J, |S| \leq \Gamma\}}\left\{
 \sum_{j \in S} \hat d_j x_{ij} \right\}\right)\\
{s.t.} \quad & \sum_{i \in I} \sum_{j \in J} d_j x_{ij} - \max_{\{S: \ S \subseteq  J, |S| \leq \Gamma\}}\left\{\sum_{i \in I} \sum_{j \in S} \hat d_j x_{ij} \right\} \geq D \\
&\sum_{i \in I(j)} x_{ij} \leq 1 \hspace{7.02cm} j \in J \\
& 0 \leq x_{ij} \leq y_i \hspace{7.15cm} j \in J, i \in I(j)\\
& y_{i} \in \{0,1\} \hspace{7.4cm} i \in I
\end{aligned}
\end{equation}
where, for a given $\Gamma$ and allocation choice $x_{ij}$, the objective function is taking into account the sum of the $\Gamma$ largest deviations in case the demand is increasing from the nominal value, whereas in the covering constraint, we are considering the sum of the $\Gamma$ largest deviations in case the demand is decreasing from the nominal value.

We now introduce the auxiliary aggregated variables $v_i\geq 0$ for $i \in I$, 
denoting the total demand served by facility $i$ (also known as facility load). Then,
Problem \eqref{robust_nonconvex} becomes
\begin{subequations}\label{new_def_v}
\begin{align}
 \displaystyle{\min_{x \ge 0, y \in \{0,1\}^{|I|}}} \quad & \displaystyle\sum_{i \in I} f_{i}y_{i}   + \sum_{i \in I} F_{ i}(v_i) \\
{s.t.} \quad & \displaystyle v_i \geq \sum_{j \in J} d_j x_{ij} + \max_{\{S: \ S \subseteq  J, |S| \leq \Gamma\}}\left\{\sum_{j \in S} \hat d_j x_{ij} \right\} && i \in I \label{constraint_on_v}
\\
&  \displaystyle\sum_{i \in I} \sum_{j \in J} d_j x_{ij} - \max_{\{S: \ S \subseteq  J, |S| \leq \Gamma\}}\left\{\sum_{i \in I} \sum_{j \in S} \hat d_j x_{ij} \right\} \geq D \label{constraint_on_D}\\
& \displaystyle\sum_{i \in I} x_{ij} \leq 1 && j \in J \\
& \displaystyle  x_{ij} \leq y_i && j \in J, i \in I(j)
\end{align}
\end{subequations}

Following  \cite{bertsimas2003robust}, by applying strong duality to the inner maximization robust terms, we 
 can derive the following equivalent MINLP formulation of this problem
\begin{subequations}\label{whole_extended}
\begin{align}
\displaystyle
{\min_{(x, \tau, \rho, \pi, \sigma)\geq 0,\, y \in \{0,1\}^{|I|}}} \quad &  \sum_{i \in I} f_{i}y_{i}  + \sum_{i \in I} F_{ i}(v_i)  \nonumber\\
{s.t.} \quad & v_i - \sum_{j \in J} d_j x_{ij} - \left(\Gamma \rho_i + \sum_{j \in J} \sigma_{ij}\right) \geq 0 && i \in I \label{first}\\
& \sum_{i \in I} \sum_{j \in J} d_j x_{ij} - \left(\Gamma \tau + \sum_{j \in J} \pi_j\right) \geq D \label{second}\\
& \tau + \pi_j \geq \sum_{i \in I} \hat d_j x_{ij} && j \in J\\
& \rho_i + \sigma_{ij} \geq  \hat d_j x_{ij} && j \in J, i \in I\\
&\sum_{i \in I} x_{ij} \leq 1 && j \in J \\
& x_{ij} \leq y_i && j \in J, i \in I(j) \label{last}
\end{align}
\end{subequations}

We can state the following theorem, whose proof is in the Appendix \ref{sec:appendix_proof}.
\begin{theorem}
\label{th:appendix}
    Formulation \eqref{whole_extended} is equivalent to \eqref{new_def_v}.
\end{theorem}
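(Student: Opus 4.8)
The plan is to obtain \eqref{whole_extended} from \eqref{new_def_v} in three moves: first replace each inner maximization over subsets $S\subseteq J$ with $|S|\le\Gamma$ by an equivalent linear program, then dualize each of these linear programs, and finally show that promoting the dual multipliers to decision variables and imposing dual feasibility neither enlarges nor shrinks the projected feasible set, so the two optimal values coincide.

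First I would linearize the inner maxima. For fixed $x\ge 0$ and any facility $i\in I$, since $\hat d_j x_{ij}\ge 0$ for all $j$ and $\Gamma$ is a nonnegative integer, the classical argument of Bertsimas and Sim \cite{bertsimas2003robust} gives
\begin{equation*}
\max_{\{S:\ S\subseteq J,\ |S|\le\Gamma\}}\ \sum_{j\in S}\hat d_j x_{ij}
\;=\;\max\Bigl\{\sum_{j\in J}\hat d_j x_{ij}\,z_j\ :\ \sum_{j\in J}z_j\le\Gamma,\ 0\le z_j\le 1\Bigr\},
\end{equation*}
because the right-hand polytope admits an integral optimal vertex, obtained by placing unit weight on the $\Gamma$ largest (nonnegative) coefficients. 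The same identity, now with coefficient vector $\sum_{i\in I}\hat d_j x_{ij}=\hat d_j\sum_{i\in I}x_{ij}\ge 0$, linearizes the single aggregated maximum appearing in the coverage constraint \eqref{constraint_on_D}.

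Next I would take the LP dual of each of these $|I|+1$ inner linear programs; every one of them is feasible ($z=0$) and bounded (compact feasible region), so LP strong duality applies. The dual of the facility-$i$ program is $\min\{\Gamma\rho_i+\sum_{j\in J}\sigma_{ij}\ :\ \rho_i+\sigma_{ij}\ge\hat d_j x_{ij}\ \forall j,\ \rho_i\ge 0,\ \sigma_{ij}\ge 0\}$, and the dual of the coverage program is $\min\{\Gamma\tau+\sum_{j\in J}\pi_j\ :\ \tau+\pi_j\ge\sum_{i\in I}\hat d_j x_{ij}\ \forall j,\ \tau\ge 0,\ \pi_j\ge 0\}$, which are precisely the expressions in \eqref{first}–\eqref{last}.

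Finally I would argue the equivalence of the two full models. Constraint \eqref{constraint_on_v} reads $v_i-\sum_{j\in J}d_jx_{ij}\ge(\text{optimal value of the facility-}i\text{ primal LP})=(\text{optimal value of its dual})$; hence it holds \emph{if and only if} there exist $\rho_i\ge 0$, $\sigma_{ij}\ge 0$ that are dual-feasible and satisfy $\Gamma\rho_i+\sum_{j\in J}\sigma_{ij}\le v_i-\sum_{j\in J}d_jx_{ij}$, i.e.\ \eqref{first}. The ``if'' direction is weak duality (any dual-feasible value upper-bounds the primal maximum), and the ``only if'' direction uses a dual optimal solution. The same reasoning turns \eqref{constraint_on_D} into \eqref{second} together with the $\tau+\pi_j$ dual-feasibility constraints. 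Since the remaining constraints and the objective $\sum_{i\in I}f_iy_i+\sum_{i\in I}F_i(v_i)$ are identical in both formulations and do not involve the new variables $\tau,\rho,\pi,\sigma$, every feasible point of \eqref{new_def_v} extends (by appending dual-optimal multipliers) to a feasible point of \eqref{whole_extended} of equal objective value, and every feasible point of \eqref{whole_extended} restricts to a feasible point of \eqref{new_def_v} of no larger objective value; so the optimal values agree. The main obstacle is exactly this last step: because the inner maxima sit inside the constraints of a \emph{minimization} problem, one must verify that replacing a max-constraint by its dual multipliers preserves the projected feasible region — this is where weak duality gives one inclusion and strong duality the other. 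A secondary subtlety is keeping track of the per-facility maxima in \eqref{constraint_on_v} versus the single aggregated max in \eqref{constraint_on_D}; this affects only the coefficient vectors fed into the otherwise identical dualization.
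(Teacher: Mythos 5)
Your proposal is correct and follows essentially the same route as the paper's proof: rewrite each inner maximization as an LP whose relaxation has integral optima, dualize it with multipliers $(\rho_i,\sigma_{ij})$ and $(\tau,\pi_j)$, and invoke LP strong duality to replace the protection terms in \eqref{constraint_on_v} and \eqref{constraint_on_D} by the dual-feasibility system \eqref{first}--\eqref{last}. Your explicit weak-duality/strong-duality argument for why the substitution preserves the projected feasible region is a welcome elaboration of a step the paper states without detail, but it is not a different method.
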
 Problem \eqref{whole_extended} is a much more computationally tractable formulation. 
Indeed, the problem is now formulated as a mixed-integer program with linear constraints and a convex objective function.
\subsubsection{The perspective reformulation}
Following the work of \cite{desrochers1995congested,fischetti2016benders}, we consider the specific case where the congestion function is given by 
the convex function $F_{i}(t)=a_{i}t^2 + b_{i}t$ for $i \in I$, with $a_{i}$ and $b_{i}$ non-negative input coefficients. 
Then, the robust counterpart of the problem, formulated in \eqref{whole_extended}, reads
\begin{align}
\displaystyle
{\min_{(x,\tau, \rho, \pi, \sigma)\geq 0,\, y \in \{0,1\}^{|I|}}} \quad &  \sum_{i \in I} f_{i}y_{i} +  \sum_{i \in I} b_{i} v_i + \sum_{i \in I} a_{i}  v_i^2 \nonumber\\
{s.t.} \quad
& \eqref{first}-\eqref{last} \nonumber.
\end{align}
which is a mixed-integer (convex) quadratic program (MIQP) with binary variables. Therefore, we can
derive the so-called \textit{perspective reformulation}
\cite{frangioni2009computational,gunluk2012perspective}
\begin{subequations}\label{whole_perspective}
\begin{align}
\displaystyle
{\min_{(x,u,v,\tau, \rho, \pi, \sigma)\geq 0,\, y \in \{0,1\}^{|I|}}} \quad &  \sum_{i \in I} f_{i}y_{i} + \sum_{i \in I} b_i v_i + \sum_{i \in I} a_{i} u_i \nonumber\\
{s.t.} \quad & \eqref{first}-\eqref{last} \nonumber\\
& v_i^2 \leq u_i y_i && i \in I \label{perspective_cuts}
\end{align}
\end{subequations}
in which the quadratic term $v_i^2$ in the objective function is replaced by a non-negative variable $u_i$.
Constraints \eqref{perspective_cuts} are rotated second-order cone constraints (hence still convex) as the right-hand side is the product of non-negative variables. They are obtained by the strengthening of the $u_i$ variables through constraints $v_i^2 \leq u_i$, by replacing the convex function $v_i^2$ with its perspective defined by $y_i(v_i/y_i)^2$ if $y_i > 0$, and zero if $y_i = 0$; see \cite{gunluk2012perspective} for details.
It is well-known (see e.g., \cite{akturk2009strong,frangioni2006perspective,frangioni2007sdp,frangioni2008tighter,gunluk2008perspective,gunluk2012perspective}) that the continuous relaxation of a perspective reformulation produces stronger bounds than the bounds given by the continuous relaxation of the original formulation. Consequently, we can state that the original formulation \eqref{whole_extended}, with $F_{i}(t)=a_{i}t^2 + b_{i}t$, lies in an extended space compared to the formulation \eqref{whole_perspective}, and for this reason, we denote it as the \textit{extended formulation}.

\section{Benders Decomposition}\label{ch_benders}

Both the extended and the perspective formulations have a number of variables and constraints that depend on the number of customers $|J|$ and facilities $|I|$.
In real world context, the number of customers can be
quite large, affecting the solution time of problems \eqref{whole_extended} or \eqref{whole_perspective}. In this regard, we propose to use Benders decomposition, a classical method for mixed-integer linear programming introduced by Benders \cite{benders1962partitioning} in 1962. Our idea is to reduce as much as possible the number of variables in the problem, by projecting out of the master problem (at least) all the variables that depend on the number of customers. This will lead to a boosting in the solution time, as we will see in Section \ref{ch_results}. 
Specifically, we investigate the projecting out of the continuous variables $(x, \tau, \rho, \pi, \sigma)$, leading to a convex master problem containing the cones coming from the perspective reformulation and the complete objective function, and an LP subproblem. The advantage of having the cones in the master formulation is that the master problem has a very tight relaxation, which is amenable when dealing with Benders decomposition as it can substantially reduce solution times. We also observe that the subproblem -- which is solved several times to generate valid cuts for the master -- is an LP that can be therefore efficiently solved by any state-of-the-art solver.

\subsection{The master problem}

From the formulation \eqref{whole_perspective}, we project out the continuous variables $(x,\rho,\tau,\pi,\sigma)$. The master
problem is then defined on the variables $(y,u,v)$ and is given by
\begin{equation}\label{pb:master}
\begin{array}{rlrr}
\displaystyle
{\min_{(u,v) \geq 0,\, y \in \{0,1\}^{|I|}}}  & \displaystyle \sum_{i \in I} f_{i}y_{i} + \sum_{i \in I} b_i v_i + \sum_{i \in I} a_i u_i\\
{s.t.} \quad & \phi(y,v) \leq 0 \\
& v_i^2 - u_i y_i \leq 0 && i \in I
\end{array}\end{equation}
where $\phi(y,v)$ is a convex function that evaluates the feasibility of the vector $(y,v)$ for the constraints \eqref{first}-\eqref{last}. It can be approximated by linear cuts to be generated on the fly, known as Benders cuts, that are valid for any given vector $(y, v)$. 
Usually, two types of Benders cuts are used: the optimality cuts and the feasibility cuts. In our approach, there is no need for Benders optimality cuts, as all the variables appearing in the objective function $(y,v,u)$ belong to the master problem. Hence, we will only use Benders feasibility cuts to discard infeasible points.
The number of these cuts is exponential, making enumerating them all at once impractical.
Since only some of them are necessary to find an optimal solution, they will be dynamically separated by the decomposition approach. Consequently, the master problem will contain only a subset of Benders cuts, belonging to a so-called \emph{relaxed master problem}.

The relaxed master problem is a mixed-integer second order cone program (MISOCP), thus a convex MINLP. It  can be solved as a mixed-integer linear program by a branch-and-cut approach where the integrality requirement on $y$ is relaxed and linear outer-approximations of the
conic constraints are generated on the fly or a nonlinear programming solver is used for the underlying NLP problem at every node \cite{gurobiWebsite}.

\subsection{The subproblem}

Given a master solution $(\bar y,\bar v,\bar u)$, the subproblem is then given by
\begin{align*}
\displaystyle
\phi(\bar y,\bar v) = {\min_{(x,\rho, \tau,\pi,\sigma)\geq 0}} \quad &  0\\
{s.t.} \quad &  \sum_{j \in {J}} d_j x_{ij} + \Gamma \rho_i + \sum_{j \in {J}} \sigma_{ij} \leq  \bar v_i  && i \in I \\
& \sum_{j \in J} \sum_{i \in {I}} d_j x_{ij} - \Gamma \tau - \sum_{j \in J} \pi_j \geq D\\
& \tau + \pi_j - \sum_{i \in {I}} \hat d_j x_{ij} \geq 0 && j \in J\\
& \sigma_{ij} + \rho_i - \hat d_j x_{ij} \geq 0  && j \in J, i \in I(j)\\
&\sum_{i \in {I}} x_{ij} \leq 1 && j \in J\\
& x_{ij} \leq \bar y_i && i \in I(j), j \in J.
\end{align*}
which is a linear programming problem. The problem above can be feasible and in this case there is no need to generate a Benders cut, or infeasible. In the latter case a Benders feasibility cut is produced, i.e., a cutting plane 
$\phi(\bar y,\bar v) \leq 0$
that discards the master solution $(\bar y,\bar v,\bar u)$ that leads to the infeasibilty of the subproblem. Solving a subproblem that could potentially be infeasible may lead to computational issues. An infeasible primal problem means that in the dual space we are optimizing over an unbounded cone pointed at zero. Among the successful strategies to overcome these difficulties are the normalization techniques that consist  of solving the dual LP over a bounded polyhedron. There is abundant literature on different normalization techniques for Benders feasibility cuts, see, e.g., \cite{cordeau2019benders, fischetti2010note, ljubic2012exact, magnanti1981accelerating, papadakos2008practical}.
We use a very natural approach exploiting the fact that the solution $(\bar y, \bar v, \bar u)$ of the relaxed master problem is infeasible for the subproblem if and only if the demand covered by $\bar y$ is strictly less than $D$. Indeed, in our case, the covering constraint forms the irreducible infeasible subsystem, which is the minimal subset of constraints whose removal makes the problem feasible. Hence, instead of solving a feasibility LP given by the formulation above, we search for the maximum coverage. The resulting subproblem is the following LP
\begin{align*}
\displaystyle
\phi^{\prime}(\bar y,\bar v) = {\max_{(x,\rho, \tau,\pi,\sigma)\geq 0}} \quad &  \sum_{j \in J} \sum_{i \in {I}} d_j x_{ij} - \Gamma \tau - \sum_{j \in J} \pi_j\\
{s.t.} \quad &  \sum_{j \in {J}} d_j x_{ij} + \Gamma \rho_i + \sum_{j \in {J}} \sigma_{ij} \leq  \bar v_i  && i \in I\\
& \tau + \pi_j - \sum_{i \in {I}} \hat d_j x_{ij} \geq 0 && j \in J\\
& \sigma_{ij} + \rho_i - \hat d_j x_{ij} \geq 0  && j \in J, i \in I(j)\\
&\sum_{i \in {I}} x_{ij} \leq 1 && j \in J\\
& x_{ij} \leq \bar y_i && j \in J, i \in I(j),
\end{align*}
in which we observe that for every given $(\bar y, \bar v)$, the vector $(x,\rho,\tau,\pi,\sigma)=(0,0,0,0,0)$ is a feasible solution, i.e., this problem is always feasible and bounded as $(x, \tau, \pi)$ are bounded. If the optimal value of the subproblem $\phi^{\prime}(\bar y,\bar v)$ is greater or equal to $D$, the master solution is feasible and we do not need to generate any Benders cut. If, instead, $\phi^{\prime}(\bar y,\bar v)$ is strictly less than $D$, the master solution is infeasible as it does not guarantee the required coverage and we do generate a Benders feasibility cut. The Benders cut is then given by $\phi^{\prime}(y, v)\geq D$ (and no more by $\phi(y, v) \leq 0$) for every solution $(y, v)$ of the relaxed master problem.
We notice that the extreme points of the polyhedron describing the problem $\phi^{\prime}(\bar y, \bar v)$ correspond to extreme rays of the subproblem associated to $\phi(\bar y, \bar v)$ in which the dual variable of the covering constraint is fixed to one.

Because of concavity, $\phi^{\prime}(\cdot)$ can be overestimated by a supporting hyperplane at $(\bar y, \bar v)$, so the following linear cut is also valid
$$\phi^{\prime}(\bar y, \bar v) + \xi_y (\bar y,\bar v)^T (y - \bar y) + \xi_v (\bar y,\bar v)^T (v - \bar v) \geq \phi^{\prime}(y, v) \geq D$$
where $\xi_y(\bar y,\bar v),\,\xi_v(\bar y, \bar v)$ denote the subgradients of $\phi^{\prime}$ with respect to $y, v$ in $(\bar y, \bar v)$. 

Depending on the problem, the computation of the subgradients could be heavy. Therefore, we introduce a
simple reformulation of the Benders subproblem that makes their calculation straightforward, following what was done, e.g., in \cite{fischetti2016benders}. The reformulation of the subproblem reads
\begin{subequations}\label{final_subproblem}
\begin{align}
\displaystyle
\phi^{\prime}(\bar y,\bar v) = \max_{y,v,(x,\rho, \tau,\pi,\sigma)\geq 0} \quad &  \sum_{j \in J} \sum_{i \in {I}} d_j x_{ij} - \Gamma \tau - \sum_{j \in J} \pi_j \nonumber\\
{s.t.} \quad &  \sum_{j \in {J}} d_j x_{ij} + \Gamma \rho_i + \sum_{j \in {J}} \sigma_{ij} - v_i \leq 0 && i \in I \nonumber\\
& \tau + \pi_j - \sum_{i \in {I}} \hat d_j x_{ij} \geq 0 && j \in J \nonumber\\
& \sigma_{ij} + \rho_i - \hat d_j x_{ij} \geq 0  && j \in J, i \in I(j) \nonumber\\
&\sum_{i \in {I}} x_{ij} \leq 1 && j \in J \nonumber\\
& x_{ij} - y_i \leq 0 && j \in J, i \in I(j)\nonumber \\
& y_i = \bar y_i && i \in I \label{fixing_y}\\
& v_i = \bar v_i && i \in I \label{fixing_v}
\end{align}
\end{subequations}
where we keep the master variables $y$ and $v$ as variables of the subproblem as well, and we apply variable fixing through \eqref{fixing_y}--\eqref{fixing_v}. 
By construction, the subgradients are simply $\xi_y(\bar y,\bar v) = \bar r_y$ and $\xi_v(\bar y,\bar v) = \bar r_v$, where $\bar r_y$ and $\bar r_v$ are the vectors of reduced costs associated with $y$ and $v$. 
The Benders cut reads
\begin{equation}
    \label{bcut}
    \phi^{\prime}(\bar y, \bar v) + \sum_{i \in I} \bar r_{y_i} (y_i - \bar y_i) +  \sum_{i \in I} 
 \bar r_{v_i}(v_i - \bar v_i) \geq D
\end{equation}
where each component of the reduced cost $\bar r_{y_i}$ (or $\bar r_{v_i}$) defines an upper bound on the increase of the objective function $\phi^{\prime}(\bar y, \bar v)$ when $\bar y_i$ (or $\bar v_i$) increases.

\section{Addressing degeneracy}
\label{ch_epsilon_technique}

When applying Benders decomposition, a highly degenerate subproblem admitting several optimal solutions can result in the generation of shallow Benders cuts. This slows down the convergence of the Benders decomposition, requiring the addition of many cuts, which do not improve the bound that much. There are many techniques proposed in the literature to address degeneracy (see, e.g., \cite{rahmaniani2017benders}). 
Inspired by the perturbation technique proposed in \cite{fischetti2017redesigning} to accelerate the convergence of a cut loop (i.e., the cut separation at the root node of the branching tree), we adapt this technique, that we will refer to as $\epsilon$-\textit{technique}, leading to stronger cuts. The result is an accelerated convergence and a fewer number of generated cuts, as shown from the computational experience provided later in Section \ref{ch_results}. 

We start by interpreting the Benders cut from the LP primal-dual perspective, and then show how to apply this perturbation to derive valid Benders cuts for the non-perturbed subproblem. 
Consider the \textit{non-perturbed} subproblem formulated as follows
\begin{equation}\label{subproblem_fixed}
\begin{aligned}
\phi^{\prime}(\bar y,\bar v) = {\max_{y,v,(x,\rho, \tau,\pi,\sigma)\geq 0}} \quad &  \sum_{j \in J} \sum_{i \in {I}} d_j x_{ij} - \Gamma \tau - \sum_{j \in J} \pi_j \\
{s.t.} \quad &  \sum_{j \in {J}} d_j x_{ij} + \Gamma \rho_i + \sum_{j \in {J}} \sigma_{ij} - v_i \leq 0 &\quad& i \in I \\
& \tau + \pi_j - \sum_{i \in {I}} \hat d_j x_{ij} \geq 0 &\quad& j \in J \\
& \sigma_{ij} + \rho_i - \hat d_j x_{ij} \geq 0  &\quad& j \in J, i \in I(j) \\
&\sum_{i \in {I}} x_{ij} \leq 1 &\quad& j \in J && (\alpha_j)\\
& x_{ij} - y_i \leq 0 &\quad& j \in J, i \in I(j) \\
& y_i = \bar y_i &\quad& i \in I && (\beta_i)\\
& v_i = \bar v_i &\quad& i \in I. && (\gamma_i) 
\end{aligned}
\end{equation}
where by $\alpha,\, \beta$ and $ \gamma$ we denote the dual variables (or the Lagrangian multipliers).
Given the optimal dual solution $(\bar \alpha, \bar \beta, \bar \gamma)$, the optimal value of the dual of \eqref{subproblem_fixed} is
\begin{equation*}
    \phi^{\prime}(\bar y,\bar v) =  \sum_{j \in J} \bar \alpha_j + \sum_{i \in I}  \bar y_i \bar \beta_i + \sum_{i \in I} \bar v_i \bar \gamma_i.
\end{equation*}
We can get the expression of the Benders cut from the LP dual by imposing that the dual objective at the optimal solution is at least the target coverage, namely 
\begin{equation*}
\sum_{j \in J} \bar \alpha_j + \sum_{i \in I} \bar \beta_i y_i  + \sum_{i \in I} \bar \gamma_i v_i \geq D.
\end{equation*}
The dual problem is highly degenerate as many objective coefficients, i.e., many components of $\bar y$ and $\bar v$ can be zero, resulting in several free components of the dual variables $\beta$ and $\gamma$ and many equivalent
optimal solutions. To get the strongest Benders cut, we need the dual multipliers appearing in the cut to take the smallest values possible. To get this, we can apply the $\epsilon$-technique that consists of replacing the zero objective coefficients of $\bar y$ and $\bar v$ with a sufficiently small $\epsilon > 0$ and solving the resulting problem. This induces the dual model to minimize also the components of the dual variables $\beta$ and $\gamma$ associated with the originally zero objective coefficients. 

Unfortunately, solving the dual of the subproblem often leads to more numerical issues than solving its primal, hence we explain how to apply this technique to the primal subproblem \eqref{subproblem_fixed}. Specifically, given a sufficiently small $\epsilon > 0$, applying the $\epsilon$-technique to the primal problem consists of replacing $\bar y$ and $\bar v$ with 
\begin{equation*}
    \bar y^{\epsilon}_i = \begin{cases}
    \bar y_i & \text{if } \bar y_i > 0\\
\epsilon & \text{if } \bar y_i = 0\end{cases}
\qquad \text{and} \qquad
    \bar v^{\epsilon}_i = \begin{cases}
    \bar v_i & \text{if } \bar v_i > 0\\
\epsilon & \text{if } \bar v_i = 0\end{cases}
\end{equation*}
for each $i \in I$. Then, the \textit{perturbed} subproblem reads
\begin{equation}\label{subproblem_perturbed}
\begin{aligned}
\phi^{\prime}(\bar y^{\epsilon},\bar v^{\epsilon}) = {\max_{y,v,(x,\rho, \tau,\pi,\sigma)\geq 0}} \quad &  \sum_{j \in J} \sum_{i \in {I}} d_j x_{ij} - \Gamma \tau - \sum_{j \in J} \pi_j \\
{s.t.} \quad &  \sum_{j \in {J}} d_j x_{ij} + \Gamma \rho_i + \sum_{j \in {J}} \sigma_{ij} - v_i \leq 0 &\quad& i \in I \\
& \tau + \pi_j - \sum_{i \in {I}} \hat d_j x_{ij} \geq 0 &\quad& j \in J \\
& \sigma_{ij} + \rho_i - \hat d_j x_{ij} \geq 0  &\quad& j \in J, i \in I(j) \\
&\sum_{i \in {I}} x_{ij} \leq 1 &\quad& j \in J  && (\alpha_j^{\epsilon})\\
& x_{ij} - y_i \leq 0 &\quad& j \in J, i \in I(j) \\
& y_i = \bar y^{\epsilon}_i &\quad& i \in I && (\beta_i^{\epsilon})\\
& v_i = \bar v^{\epsilon}_i &\quad& i \in I.  && (\gamma_i^{\epsilon})
\end{aligned}
\end{equation}
where by $\alpha^{\epsilon}, \beta^{\epsilon}$ and $\gamma^{\epsilon}$ we denote the dual variables. The Benders cut obtained from the Lagrangian dual of \eqref{subproblem_perturbed} is
    \begin{equation*}
        \phi^{\prime}(\bar y^{\epsilon}, \bar v^{\epsilon}) + \sum_{i \in I} \bar r_{y_i}^{\epsilon} (y_i - \bar y^{\epsilon}_{i}) +  \sum_{i \in I} 
 \bar r_{v_i}^{\epsilon}(v_i - \bar v^{\epsilon}_{i}) \geq D
    \end{equation*}
where $\bar r_{y}^{\epsilon}$ and $\bar r_{v}^{\epsilon}$ are the vectors of reduced costs associated with $y$ and $v$. We now state some properties of the perturbed problem and clarify the relationship between the perturbed and the non-perturbed problem.

\begin{lemma}\label{lemma_epsilon_technique} Let $(\bar r_y, \bar r_v)$ and $(\bar r^{\epsilon}_y, \bar r^{\epsilon}_v)$ be the reduced costs introduced for the Benders subproblem \eqref{subproblem_fixed} and its $\epsilon$-perturbed counterpart \eqref{subproblem_perturbed}. The following equations are valid
$$\bar r_{y} = \bar \beta, \quad \bar r_{v} = \bar \gamma  , \quad  \bar r^{\epsilon}_{y} = \bar \beta^\epsilon, \quad \bar r_{v}^\epsilon = \bar \gamma^\epsilon.$$
\end{lemma}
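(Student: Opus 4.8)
The plan is to derive all four identities from linear-programming duality, exploiting the particular way the Benders subproblem \eqref{final_subproblem} is built. First I would observe that it is enough to prove $\bar r_y=\bar\beta$ and $\bar r_v=\bar\gamma$ for the non-perturbed subproblem \eqref{subproblem_fixed}: the perturbed subproblem \eqref{subproblem_perturbed} is obtained from \eqref{subproblem_fixed} by replacing only the right-hand-side constants $\bar y_i,\bar v_i$ of the fixing constraints \eqref{fixing_y}--\eqref{fixing_v} by $\bar y_i^{\epsilon},\bar v_i^{\epsilon}$, while leaving the objective and the entire constraint matrix unchanged, so the identical argument applied to \eqref{subproblem_perturbed} yields $\bar r_y^{\epsilon}=\bar\beta^{\epsilon}$ and $\bar r_v^{\epsilon}=\bar\gamma^{\epsilon}$. (One first checks, exactly as for \eqref{subproblem_fixed}, that \eqref{subproblem_perturbed} is feasible and bounded --- $(x,\rho,\tau,\pi,\sigma)=0$, $y=\bar y^{\epsilon}$, $v=\bar v^{\epsilon}$ is feasible and the optimum is finite --- so that $\bar\beta^{\epsilon},\bar\gamma^{\epsilon}$ and the reduced costs $\bar r_y^{\epsilon},\bar r_v^{\epsilon}$ are well defined.)

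Next I would write out the LP dual of \eqref{subproblem_fixed}. Two features matter: the dual feasible set --- call it $\mathcal D$ --- does not depend on $(\bar y,\bar v)$, since $\bar y$ and $\bar v$ enter the primal only as the right-hand sides of \eqref{fixing_y}--\eqref{fixing_v} (with multipliers $\beta$ and $\gamma$); and, consequently, the dual objective is the affine function $\sum_{j\in J}\alpha_j+\sum_{i\in I}\beta_i\bar y_i+\sum_{i\in I}\gamma_i\bar v_i$, as already recorded just before the statement. By LP strong duality, $\phi'(\bar y,\bar v)=\min\{\sum_{j\in J}\alpha_j+\sum_{i\in I}\beta_i\bar y_i+\sum_{i\in I}\gamma_i\bar v_i:(\alpha,\beta,\gamma,\dots)\in\mathcal D\}$, a pointwise minimum over the fixed polyhedron $\mathcal D$ of affine functions of $(\bar y,\bar v)$, hence a concave function of $(\bar y,\bar v)$.

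Then I would read off the reduced costs from the optimal dual solution at $(\bar y,\bar v)$. For the returned optimal dual vector $(\bar\alpha,\bar\beta,\bar\gamma,\dots)\in\mathcal D$, the affine map $(y',v')\mapsto\sum_{j\in J}\bar\alpha_j+\sum_{i\in I}\bar\beta_i y'_i+\sum_{i\in I}\bar\gamma_i v'_i$ agrees with $\phi'$ at $(\bar y,\bar v)$ and dominates it everywhere, being one of the terms of the defining minimum; it is therefore a supporting hyperplane of $\phi'$ at $(\bar y,\bar v)$, so $(\bar\beta,\bar\gamma)$ is a supergradient of $\phi'$ there --- equivalently, by the shadow-price theorem, the optimal multiplier of each fixing constraint equals the corresponding one-sided sensitivity $\partial\phi'/\partial\bar y_i$, $\partial\phi'/\partial\bar v_i$ of the optimal value to that right-hand side. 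On the other hand, \eqref{final_subproblem} is constructed precisely so that $y$ and $v$ survive as variables of the subproblem, pinned by \eqref{fixing_y}--\eqref{fixing_v}; by LP sensitivity the rate of change of $\phi'$ with respect to the pinned value --- which is what $\bar r_{y_i}$ (resp. $\bar r_{v_i}$) records in this construction --- is exactly the multiplier of the corresponding fixing constraint, i.e. $\bar r_{y_i}=\bar\beta_i$ and $\bar r_{v_i}=\bar\gamma_i$ for every $i\in I$. Together with the reduction of the first paragraph, this gives all four claimed identities.

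The point I expect to be delicate is the bookkeeping around degeneracy and the exact meaning of ``reduced cost''. When the dual optimum of \eqref{subproblem_fixed} is not unique, $\phi'$ fails to be differentiable at $(\bar y,\bar v)$ and $(\bar\beta,\bar\gamma)$ is only the particular supergradient attached to the optimal basis actually produced by the solver; this is harmless for validity, since the cut \eqref{bcut} holds for any supergradient, but it is exactly the phenomenon the $\epsilon$-technique is designed to exploit and should be flagged. It is also worth stating explicitly that $\bar r_y,\bar r_v$ denote the reduced costs of the pinned copies of $y,v$ inside \eqref{final_subproblem}, so that their identification with $\bar\beta,\bar\gamma$ is literal and not merely the informal ``by construction'' remark made after \eqref{final_subproblem}.
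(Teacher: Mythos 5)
Your proof is correct and rests on the same LP strong-duality fact as the paper's: the dual multiplier of each fixing constraint \eqref{fixing_y}--\eqref{fixing_v} coincides with the reduced cost of the corresponding pinned variable. The paper obtains this by writing the Lagrangian-dual bound and the reduced-cost linearization as one and the same affine function of $(y,v)$ and matching coefficients, whereas you phrase it through the supporting-hyperplane/sensitivity interpretation of $\phi^{\prime}$; the two arguments are interchangeable, and your explicit reduction of the perturbed case to the unperturbed one is exactly what the paper does implicitly with ``by applying the same procedure.''
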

\begin{proof}
    Consider the non-perturbed problem \eqref{subproblem_fixed}. Since \eqref{subproblem_fixed} is an LP, from the equivalence between LP duality and Lagrangian duality for LP problems we have
    $$\sum_{j \in J} \bar \alpha_j + \sum_{i \in I} \bar \beta_i y_i  + \sum_{i \in I} \bar \gamma_i v_i =  \phi^{\prime}(\bar y, \bar v) + \sum_{i \in I} \bar r_{y_i} (y_i - \bar y_i) +  \sum_{i \in I} 
 \bar r_{v_i}(v_i - \bar v_i)$$
which implies
 $$\sum_{i \in I}  \bar \beta_i y_i  = \sum_{i \in I} \bar r_{y_i} y_i , \quad  \sum_{i \in I}  \bar \gamma_i v_i = \sum_{i \in I} \bar r_{v_i} v_i, \quad \sum_{j \in J} \bar \alpha_j  = \phi^{\prime}(\bar y, \bar v) - \sum_{i \in I} \bar r_{y_i} \bar y_i - \sum_{i \in I} \bar r_{v_i} \bar v_i,$$
meaning that $\bar r_{y_i} = \bar \beta_i$ and $\bar r_{v_i} = \bar \gamma_i$ for each $i \in I$, namely the dual variables associated with the fixing constraints are the reduced costs associated with the fixed variables. By applying the same procedure to the perturbed problem \eqref{subproblem_perturbed}, we get $\bar r^{\epsilon}_{y_i} = \bar \beta_i^\epsilon$ and $\bar r_{v_i}^\epsilon = \bar \gamma_i^\epsilon$ for each $i \in I$.
\end{proof}
\begin{flushright}
$\square$
\end{flushright}

\begin{proposition}\label{proposition_epsilon_technique}
If there exists a sufficiently small $\epsilon >0$ such that the dual solution $(\alpha^{\epsilon}, \beta^{\epsilon}, \gamma^{\epsilon})$ of the perturbed problem \eqref{subproblem_perturbed} is an optimal solution of the dual of the non-perturbed problem \eqref{subproblem_fixed}, then \begin{itemize}
    \item[(1)]  $\phi^{\prime}(\bar y, \bar v) =\displaystyle \phi^{\prime}(\bar y^{\epsilon}, \bar v^{\epsilon}) - \epsilon \sum_{i \in I:\, \bar y_i = 0} \bar r_{y_i}^{\epsilon} - \epsilon \sum_{i \in I:\, \bar v_i = 0} \bar r_{v_i}^{\epsilon}$, and
    \item[(2)] the following inequality
    \begin{equation}\label{bcut_perturbed}
        \phi^{\prime}(\bar y, \bar v) + \sum_{i \in I} \bar r_{y_i}^{\epsilon} (y_i - \bar y_i) + \sum_{i \in I} \bar r_{v_i}^{\epsilon} (v_i - \bar v_i) \geq D
    \end{equation}
   is a valid Benders cut for the non-perturbed problem \eqref{subproblem_fixed}.
\end{itemize}
\end{proposition}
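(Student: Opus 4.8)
The plan is to derive both items from strong LP duality, together with Lemma~\ref{lemma_epsilon_technique} and the single structural observation that the $\epsilon$-perturbation alters only the right-hand sides of the fixing constraints \eqref{fixing_y}--\eqref{fixing_v}; consequently the dual feasible region of the perturbed subproblem \eqref{subproblem_perturbed} coincides with that of the non-perturbed subproblem \eqref{subproblem_fixed}. Both subproblems are feasible and bounded (as noted earlier in Section~\ref{ch_benders}), so strong duality holds and the dual optima are attained.

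For item (1), I would start from strong LP duality for the perturbed subproblem, which gives $\phi'(\bar y^\epsilon,\bar v^\epsilon)=\sum_{j\in J}\bar\alpha_j^\epsilon+\sum_{i\in I}\bar y_i^\epsilon\bar\beta_i^\epsilon+\sum_{i\in I}\bar v_i^\epsilon\bar\gamma_i^\epsilon$. Using $\bar y_i^\epsilon=\bar y_i$ for $i$ with $\bar y_i>0$ and $\bar y_i^\epsilon=\epsilon$ (with $\bar y_i=0$) otherwise, and likewise for $v$, I would split the last two sums as $\sum_i\bar y_i^\epsilon\bar\beta_i^\epsilon=\sum_i\bar y_i\bar\beta_i^\epsilon+\epsilon\sum_{i:\bar y_i=0}\bar\beta_i^\epsilon$ and $\sum_i\bar v_i^\epsilon\bar\gamma_i^\epsilon=\sum_i\bar v_i\bar\gamma_i^\epsilon+\epsilon\sum_{i:\bar v_i=0}\bar\gamma_i^\epsilon$, so that
\[
\phi'(\bar y^\epsilon,\bar v^\epsilon)=\Bigl(\sum_{j\in J}\bar\alpha_j^\epsilon+\sum_{i\in I}\bar y_i\bar\beta_i^\epsilon+\sum_{i\in I}\bar v_i\bar\gamma_i^\epsilon\Bigr)+\epsilon\sum_{i:\bar y_i=0}\bar\beta_i^\epsilon+\epsilon\sum_{i:\bar v_i=0}\bar\gamma_i^\epsilon .
\]
The parenthesized term is precisely the dual objective of \eqref{subproblem_fixed} evaluated at $(\bar\alpha^\epsilon,\bar\beta^\epsilon,\bar\gamma^\epsilon)$; by the structural observation this triple is dual feasible for \eqref{subproblem_fixed}, and by the hypothesis of the proposition it is dual optimal, so strong duality makes the parenthesized term equal to $\phi'(\bar y,\bar v)$. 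Substituting $\bar\beta_i^\epsilon=\bar r_{y_i}^\epsilon$ and $\bar\gamma_i^\epsilon=\bar r_{v_i}^\epsilon$ from Lemma~\ref{lemma_epsilon_technique} and solving for $\phi'(\bar y,\bar v)$ yields (1).

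For item (2), I would use that the reduced costs $(\bar r_y^\epsilon,\bar r_v^\epsilon)$ define a supporting hyperplane at $(\bar y^\epsilon,\bar v^\epsilon)$ of the concave value function $\phi'$ --- the same overestimation argument already invoked in Section~\ref{ch_benders} for the non-perturbed subproblem, applied now to \eqref{subproblem_perturbed} --- giving $\phi'(y,v)\le\phi'(\bar y^\epsilon,\bar v^\epsilon)+\sum_{i\in I}\bar r_{y_i}^\epsilon(y_i-\bar y_i^\epsilon)+\sum_{i\in I}\bar r_{v_i}^\epsilon(v_i-\bar v_i^\epsilon)$ for every $(y,v)$. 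I would then re-center the right-hand side at $(\bar y,\bar v)$: since $\bar y_i^\epsilon-\bar y_i$ equals $\epsilon$ when $\bar y_i=0$ and $0$ otherwise (and similarly for $v$), the right-hand side rewrites as $\bigl(\phi'(\bar y^\epsilon,\bar v^\epsilon)-\epsilon\sum_{i:\bar y_i=0}\bar r_{y_i}^\epsilon-\epsilon\sum_{i:\bar v_i=0}\bar r_{v_i}^\epsilon\bigr)+\sum_{i\in I}\bar r_{y_i}^\epsilon(y_i-\bar y_i)+\sum_{i\in I}\bar r_{v_i}^\epsilon(v_i-\bar v_i)$, and by item (1) the bracketed quantity equals $\phi'(\bar y,\bar v)$. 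Hence $\phi'(y,v)\le\phi'(\bar y,\bar v)+\sum_{i\in I}\bar r_{y_i}^\epsilon(y_i-\bar y_i)+\sum_{i\in I}\bar r_{v_i}^\epsilon(v_i-\bar v_i)$ for all $(y,v)$; since any $(y,v)$ feasible for the relaxed master obeys $\phi'(y,v)\ge D$, it satisfies \eqref{bcut_perturbed}, so \eqref{bcut_perturbed} cuts off no feasible master solution and is a valid Benders cut.

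I expect item (1) to be the crux. The delicate points are keeping $\bar y$ and $\bar y^\epsilon$ separate inside the dual objective during the split, and --- above all --- recognizing that the perturbed dual solution is automatically a \emph{feasible} dual solution of the non-perturbed problem (because only right-hand sides change), which is what lets the proposition's hypothesis upgrade it to optimality and then allows strong duality to identify the parenthesized term with $\phi'(\bar y,\bar v)$. After that, the supporting-hyperplane inequality, the re-centering algebra, and the concluding validity check in item (2) are routine.
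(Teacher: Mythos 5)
Your proof is correct and follows essentially the same route as the paper: item (1) is proved by exactly the same splitting of the perturbed dual objective and identification of the bracketed term with $\phi^{\prime}(\bar y,\bar v)$ via the optimality hypothesis and Lemma \ref{lemma_epsilon_technique}. For item (2) you re-center the supporting hyperplane at $(\bar y^{\epsilon},\bar v^{\epsilon})$ using item (1), whereas the paper more tersely substitutes the perturbed reduced costs directly into \eqref{bcut}; these are the same argument in different packaging, and your explicit remark that the perturbation changes only right-hand sides (so dual feasibility is preserved) makes the hypothesis easier to interpret.
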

\begin{proof}
    From the definition of $y^{\epsilon}$ and $v^{\epsilon}$ we have
\begin{equation}\label{def_y_eps}
    \displaystyle\sum_{i \in I} \bar y^{\epsilon}_{i} = \sum_{i \in I} \bar y_i + \epsilon \sum_{i \in I:\, \bar y_i = 0} 1
\end{equation}
\begin{equation}\label{def_v_eps}
    \displaystyle\sum_{i \in I} \bar v^{\epsilon}_{i} = \sum_{i \in I} \bar v_i + \epsilon \sum_{i \in I:\, \bar v_i = 0} 1.
\end{equation}
Consider the expression of the dual objective of the perturbed problem \eqref{subproblem_perturbed} at optimal value
$$\phi^{\prime}(\bar y^{\epsilon}, \bar v^{\epsilon}) = \sum_{j \in J} \bar\alpha_j^{\epsilon} + \sum_{i \in I} \bar y_i^{\epsilon} \bar\beta_i^{\epsilon} + \sum_{i \in I} \bar v_i^{\epsilon} \bar \gamma_i^{\epsilon}.$$
By using \eqref{def_y_eps}-\eqref{def_v_eps} we get
$$\displaystyle\phi^{\prime}(\bar y^{\epsilon}, \bar v^{\epsilon}) = \underbrace{\sum_{j \in J} \bar\alpha_j^{\epsilon} + \sum_{i \in I} \bar y_i \bar\beta_i^{\epsilon} + \sum_{i \in I} \bar v_i \bar \gamma_i^{\epsilon}}_{\phi^{\prime}(\bar y, \bar v)}  + \epsilon \sum_{i \in I:\, \bar y_i = 0} \overbrace{\bar \beta_i^{\epsilon}}^{\bar r_{y_i}^{\epsilon}} + \epsilon\sum_{i \in I:\, \bar v_i = 0} \overbrace{\bar \gamma_i^{\epsilon}}^{\bar r_{v_i}^{\epsilon}}$$
which proves \textit{(1)} for the assumption we made on $\epsilon$ and using Lemma \ref{lemma_epsilon_technique}.

If $\epsilon$ is sufficiently small that $(\alpha^{\epsilon}, \beta^{\epsilon}, \gamma^{\epsilon})$ is an optimal solution of the dual of the non-perturbed problem \eqref{subproblem_fixed}, then for Lemma \ref{lemma_epsilon_technique}, we can use $\bar r^{\epsilon}_{y_i}$ and $\bar r^{\epsilon}_{v_i}$ in \eqref{bcut} and \textit{(2)} is proved.
\end{proof}
\begin{flushright}
$\square$
\end{flushright}

\section{Implementation Details}\label{ch_implementation}

Having a correct and efficient generation of the Benders cuts is a crucial point of Benders decomposition. In this section, we describe the 
steps for the design of a Benders decomposition, revealing all the implementation ingredients that play an important role in the design of an effective code. 
In particular, we start by illustrating how to implement in practice Benders decomposition in a MISOCP solver in Section \ref{BDec_implementation}. The discussion is based on the solver we used, namely Gurobi \cite{gurobi}; however, our approach can be easily extended to other solvers. Then, 
in Section \ref{rounding} we report all the rounding operations we made and the thresholds we used to mitigate numerical issues. 


\subsection{Implementing Benders decomposition in practice}
\label{BDec_implementation}

There are two ways of implementing Benders decomposition: (1) using a cutting plane procedure, also known as multi-tree approach, where each time a Benders cut is generated, the cut is included in the master problem and the latter is solved again as a MISOCP until optimality; or (2) using a branch-and-Benders-cut approach, also known as single-tree approach, in which a single enumeration tree for solving the MISOCP is created and Benders cuts are separated at the nodes as in a classical branch-and-cut procedure. The traditional approach is the multi-tree and it requires the solution of possibly several master problems to optimality. The single-tree approach instead requires the solution of only one master problem and has become very popular in the recent literature (see, e.g., \cite{fischetti2016benders,fischetti2017redesigning,ljubic2012exact}). However, its implementation was mainly applied in an MILP (rather than MISOCP) context. We describe here the implementation of the two approaches.

Note that we do not use any specialized solution method to solve the master problem or the subproblem as well. Indeed, using a general-purpose MISOCP solver benefits many advantages, such as a better warm-start mechanism and effective heuristics. This choice simplifies the implementation process considerably. 

\subsubsection{Implementation of single-tree approach}

For the single-tree approach, the common procedure when using a MIP solver is to define a customized callback function to be called at every node of the branch-and-cut applied to the master problem. The callback function, described in Algorithm \ref{callback}, defines how the separation is done. First, at each node of the branching tree the continuous relaxation of the master problem is solved and either a fractional or an integer solution $y$ is obtained.
We restricted the generation of a separating cut to integer solutions only for all nodes except the root node. Specifically, at the root node we solve the subproblem to possibly generate Benders cuts at any solution, integral and fractional, of the master relaxation; at all the other nodes, we generate Benders cuts exclusively when the solution of the master relaxation is integer.   
If at the subproblem solution the target coverage is not reached, this solution must be discarded. Therefore, we generate a normalized Benders cut, inject this cut into the master problem and proceed through the branching tree until an optimal solution is found or the maximum time is exceeded.

\begin{algorithm}
\caption{Single-tree Benders implementation}
\label{callback}
\begin{algorithmic}
\If{(we are at the root node) OR (a MIP incumbent of the master problem is found)}
                \State extract the solution $(\bar y, \bar v)$ of the relaxed master problem
                \State update the values of the master variables $(y, v)$ in the subproblem constraints \eqref{fixing_y}-\eqref{fixing_v}
                \State solve the subproblem
                \If{(the target coverage is not reached)}
                \State get the reduced costs associated with the master variables in the subproblem
                \State add the normalized feasibility Benders cut
                \eqref{bcut} to the master problem
                \EndIf
 \EndIf
\end{algorithmic}
\end{algorithm}

We observe that each variable-fixing equation (i.e., \eqref{fixing_y}-\eqref{fixing_v}) is meant to be imposed by modifying the lower and upper bounds on the corresponding variable, e.g., for the master variable $y$ we can impose $\bar y \leq y \leq \bar y$, which can be handled very efficiently by the solver in a preprocessing phase when the $\bar y$ is given.

We set the parameter \textit{timeLimit} to stop the solution process when the maximum time is reached.
Moreover, in order to use a custom callback function, we set the optimization parameters \textit{LazyConstraints} to 1 and \textit{PreCrush} to 1 for the master problem. Note that not all commercial solvers allow you to apply a callback function to a problem that contains quadratic constraints.  

\subsubsection{Implementation of multi-tree approach}
The code of the multi-tree Benders decomposition method is reported in Algorithm \ref{while_loop}. It involves solving a master problem as an MISOCP iteratively, updating the subproblem at every optimal solution of the master, and generating Benders cuts until the master solution does not violate the required coverage (we denote it as the termination condition in the algorithm) or the process terminates due to exceeding of the time limit.

\begin{algorithm}
\caption{Multi-tree Benders implementation}
\label{while_loop}
\begin{algorithmic}
    \While{termination condition is not met or a maximum time is not exceeded}
         \State solve the relaxed master problem as an MISOCP
        \State extract the optimal solution $(\bar y, \bar v)$ of the master problem 
        \State update the values of the master variables $(y, v)$ in the subproblem constraints \eqref{fixing_y}-\eqref{fixing_v}
        \State solve the subproblem
        \If{the target coverage is not reached}
        \State get the reduced costs associated with the master variables in the subproblem
        \State generate the normalized feasibility Benders cut 
        \State add it as a linear constraint to the master problem
        \Else
            \State termination condition is met
        \EndIf
    \EndWhile
\end{algorithmic}
\end{algorithm}

Also in this case, each variable-fixing equation (i.e., \eqref{fixing_y}-\eqref{fixing_v}) is imposed by modifying the lower and upper bounds on the corresponding variable. 

\subsection{Thresholds and rounding operations}
\label{rounding}

We now describe all the rounding operations we made and the thresholds we used to deal with the numerical issues. 
When we get a MIP incumbent of the master in the single-tree approach or an optimal solution of the master in the multi-tree approach, we 
round to 0-1 the binary variables belonging to the master solution. This is done even if the values should already be 0-1, as the solver may lack precision. As for the fractional master solutions, we round to 0 all the negative values of y and to 1 all the values of y exceeding 1 for the same reason. 

When applying the $\epsilon$-procedure, instead of replacing zero coefficients with $\epsilon$, we replace the coefficients below a given small tolerance $\text{tol}_{\beta} >0$ with $\epsilon$. This is because the solver is supposed to treat values below some internal threshold as zeros.

Once the subproblem at a given master solution is solved, the condition we should theoretically check to decide on the generation of the Benders cut is if $\phi^{\prime}(\bar y, \bar v) < D$.
To get a more precise check of this condition from the numerical point of view, we implement this condition as $\phi^{\prime}(\bar y, \bar v)/D < 1 - \text{tol}_{\alpha}$, where $\text{tol}_{\alpha}>0$ is a given small tolerance taking two different values, one for integer master solutions and another for fractional master solutions. Indeed, for a correct implementation of the Benders decomposition we only need to separate integral points. However,
generating cuts also at fractional solutions at the root node improves the quality of bounds and reduces the size of the search space. Therefore, we decided to separate also fractional solutions at the root node. Since this could sometimes slow down the performance, to keep under control the number of Benders user cuts generated, we set a higher value for the tolerance tol$_{\alpha}$, implying that their generation is performed only if the coverage achieved at the fractional point is far from the coverage required.

\section{Computational Experiments}
\label{ch_results}

All the experiments run on a Ubuntu server with an Intel(R) Xeon(R) Gold 5218 CPU running at 2.30 GHz, with 96 GB of RAM and 8 cores. As for the optimizer, we use Gurobi 10.0.3 with a time limit (TL) of 900 seconds. On Gurobi, to guarantee better numerical precision, we set integrality tolerance \textit{IntFeasTol} to $10^{-9}$, feasibility tolerance \textit{FeasibilityTol} to $10^{-9}$, the optimality tolerances \textit{OptimalityTol} to $10^{-9}$ and \textit{MIPGap} to 0. 
As for the setting of Benders decomposition methods in Gurobi, we enable the parameters for lazy constraints (\textit{LazyConstraints}=1) and user cuts (\textit{PreCrush}=1) for the single-tree approach to avoid any reductions and transformations that are incompatible with a custom callback, and turn off presolve (\textit{Presolve=0}) in all the Benders approaches since during the presolve phase the model is not complete as many constraints are missing and the optimizer could make reductions without knowing the whole problem. We also set the following values for the tolerances defined in the previous section: $\text{tol}_{\alpha} = 10^{-6}$ for integral points and $\text{tol}_{\alpha} = 0.5$ for fractional points, $\text{tol}_{\beta}=10^{-8}$, $\epsilon=10^{-8}$. We also turn off presolve and use a dummy lazy callback on the MISOCP model solved directly with Gurobi to have a fair comparison with our Benders methods.
To have a fairer comparison with our Benders, we turn off presolve and use a dummy callback (i.e., an empty function declared as a callback) to solve the MISOCP model via Gurobi blackbox. The use of a dummy callback is typical in the MILP context to disable dynamic search and other advanced features that solvers can apply when given a full model versus models that require lazy cut separation.

The testbed is made of large-size instances derived from Cordeau et al. \cite{cordeau2019benders}.
The original instances are tailored for realistic scenarios where the number of customers is much larger than the number of potential facility locations (i.e., $|J| >> |I|$). In particular, we selected instances with 1000 customers and 100 potential facility locations. Customer demand in these instances was generated by Cordeau et al. by uniformly sampling from the range [1,100] and rounding to the nearest integer. The spatial coordinates for both customers and facilities were randomly selected from the interval [0, 30].

We further adapted the testbed to suit our study on robust and congested settings by injecting uncertainty in the customer demand and generating congestion quadratic cost. Specifically, for each customer $j$, the deviation $\hat d_j$ from the nominal demand $d_j$ was set as a random integer drawn from the interval [$0, 20\%\, d_j$], meaning that the maximum possible deviation from the nominal value is 20\%. Customers affected by uncertainty are those for which $\hat d_j > 0$. Protection from the uncertainty in customer demand was introduced with varying levels of $\Gamma$. Congestion cost was computed by setting $a_i = a$ and $b_i = b$ for each $i \in I$: the values of $a$ and $b$ are chosen in such a way as to balance linear and quadratic costs, and obtain optimal solutions requiring the opening of less than half of the potential facilities.

\subsection{Sensitivity analysis}

In this paper we are studying a robust problem characterized by an unusual feature: the worst realizations occur in the case of increasing demand for what concerns the congestion, and in the opposite case of decreasing demand for the coverage. Therefore, we find it interesting to investigate how the optimal value, the coverage reached, the facility loads and the number of open facilities vary based on the values of the protection level $\Gamma$ in each of the following cases: the deterministic case, protection against load uncertainty, protection against coverage uncertainty, and protection against both load and coverage uncertainties. 

The primary goal of this sensitivity analysis is to understand how changes in the parameter $\Gamma$ influence the optimal solution of our problem. 
Therefore, we solve eight different instances using Gurobi on the perspective MISOCP reformulation of the problem, incrementally increasing the value of $\Gamma$ in $\{0, 2.5\%, 5\%, 10\%, 15\%, 20\%, \\
30\%, 40\%,$ $50\%, 60\%, 70\%, 80\%, 90\%, 100\%\}|J|$ and observing the resulting effects on specific metrics. The instances are characterized by a different coverage radius (R) and seed (s) used to generate the instance. 
In particular, for each instance, starting from $\Gamma$ equal to zero, namely the deterministic setting where we consider no protection towards the uncertainty, until $\Gamma$ equal to the number of customers, namely the case with the maximum protection towards the uncertainty, we report in Tables \ref{tab:sensitivity_1}- \ref{tab:sensitivity_2} the values of the overall cost (ObjVal), the number of open facilities (\#Fac), the facility opening cost (OpenCost), the congestion cost (CongCost), the average load (Load) and the coverage reached (Cov) at the optimal solution. Additionally, Figures \ref{fig:sensitivity_objval}-\ref{fig:sensitivity_load_coverage} graphically represent these metrics as functions of $\Gamma$ for a representative instance. 

From the results collected, we can claim that for increasing values of $\Gamma$, we get increasing optimal values due to an increasing number of open facilities (hence increasing opening cost) and an increasing average load at the facilities (hence increasing congestion cost), until a certain value of $\Gamma$ denoted in the figures as $\Gamma^{\star}$. For $\Gamma \geq \Gamma^{\star}$, the values of these metrics remain stable, meaning that over-conservative protection is not necessary as it does not affect the optimal value. We can observe that $\Gamma^{\star}$ takes smaller values in the case we only protect from the load uncertainty, higher values in the case we only protect from the coverage uncertainty, intermediate values (around $50\%|J|$) in the case we protect from both. Also note how the optimal objective value in the double protection scenarios is closer to that of coverage-only protection scenarios. 
As for the coverage reached, it remains constant at the nominal coverage for the case of load uncertainty only, and it is increasing with increasing values of $\Gamma$ for the other cases. We remark that we are empirically overestimating $\Gamma$ (and $\Gamma^{\star}$ as well) as we do not consider all possible values of $\Gamma$.

Our findings indicate that the optimal solutions are highly sensitive to the value of the protection level $\Gamma$, especially if we consider an uncertain coverage. This understanding is crucial for making informed planning decisions and assessing the reliability of a deterministic solution.

\begin{table}[h!]
\caption{Sensitivity analysis (part I). We denote by * the optimal value in correspondence to $\Gamma^{\star}$.}\label{tab:sensitivity_1}
\resizebox{\textwidth}{!}{
}
\end{table}

\begin{figure}[h!]
    \centering
    \includegraphics[width=9cm]{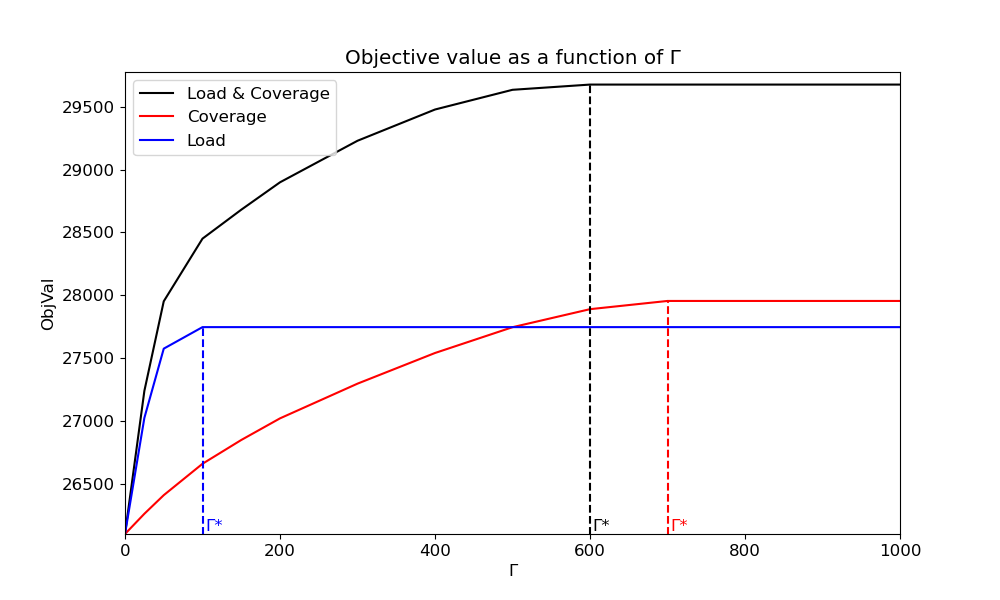}\includegraphics[width=9cm]{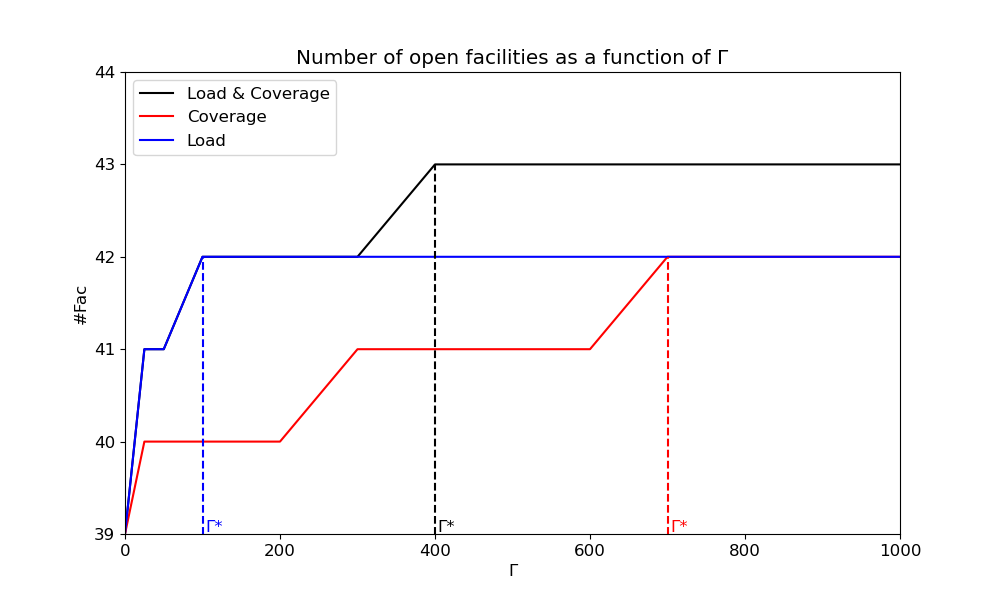}
    \caption[Solution profiles]{Plot of the variation of the overall objective value (on the left) and the number of open facilities (on the right)  based on $\Gamma$ values for an instance having 1000 customers, seed 1 and coverage radius 5.5.}
    \label{fig:sensitivity_objval}
\end{figure}
\begin{figure}[h!]
    \centering
    \includegraphics[width=9cm]{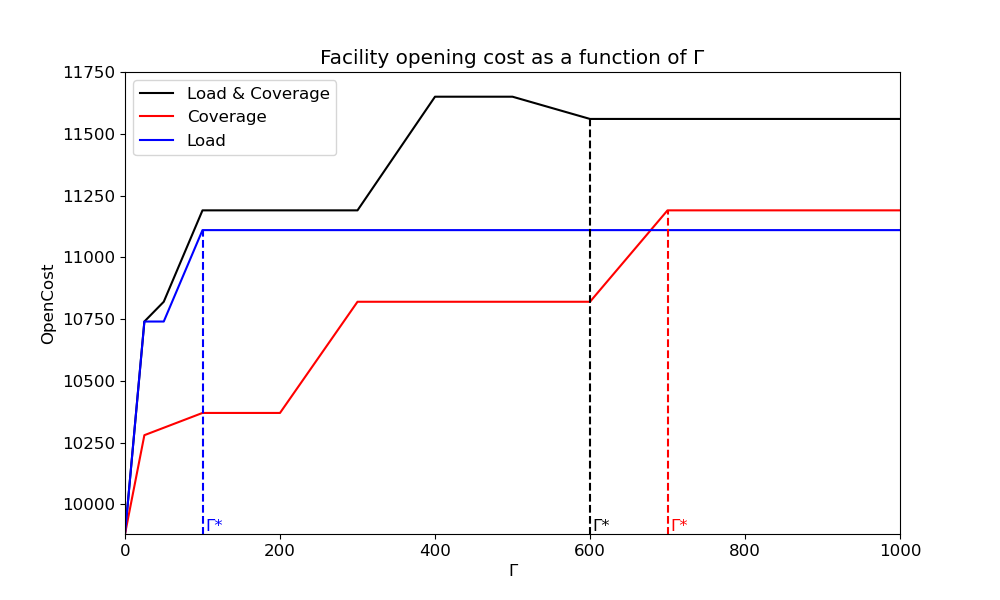}\includegraphics[width=9cm]{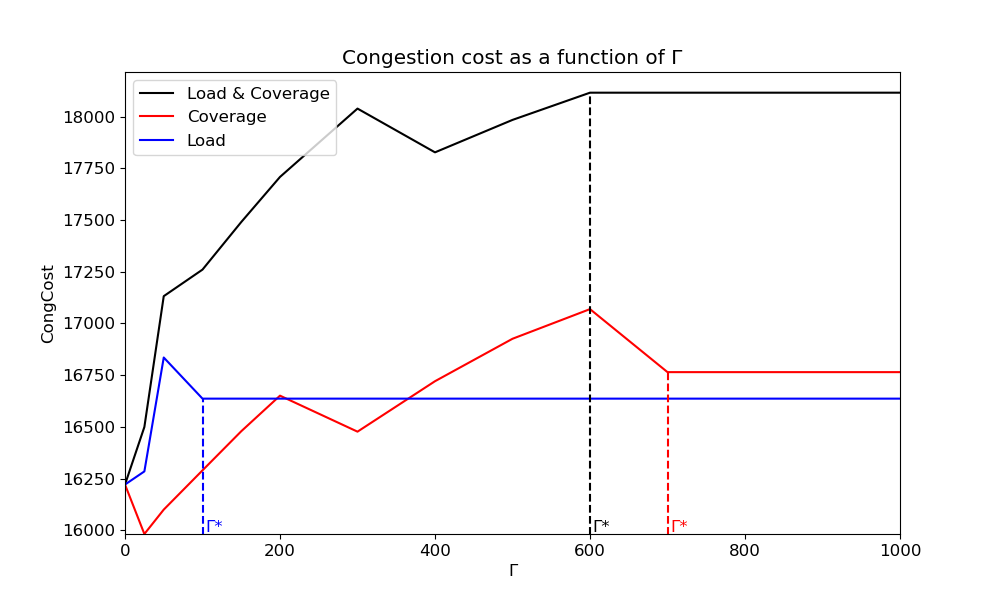}
    \caption[Solution profiles]{Plot of the variation of the facility opening cost (on the left) and the congestion cost (on the right) based on $\Gamma$ values for an instance having 1000 customers, seed 1 and coverage radius 5.5.}
    \label{fig:sensitivity_opening_congestion}
\end{figure}
\begin{figure}[h!]
    \centering
    \includegraphics[width=9cm]{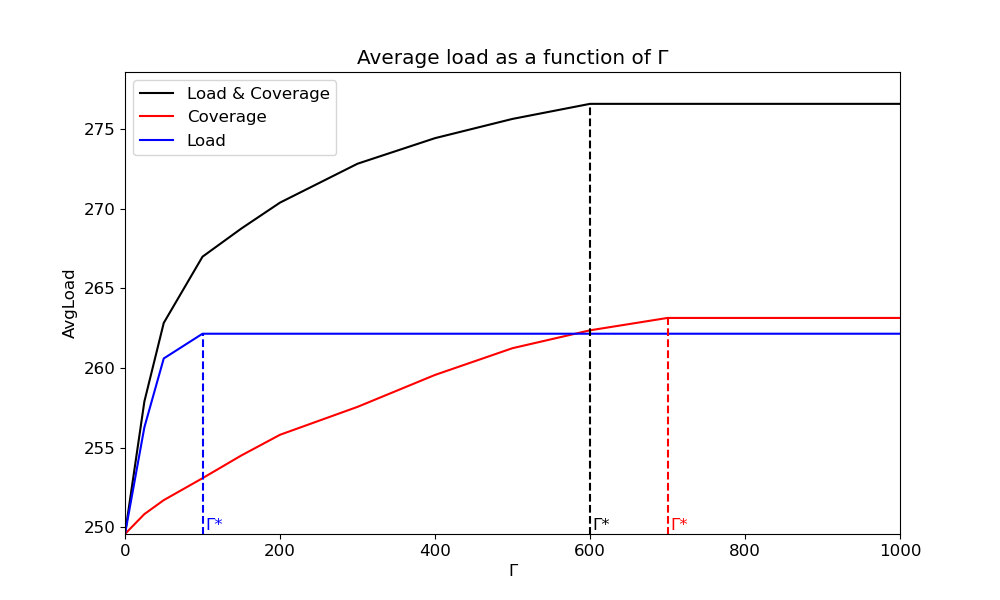}\includegraphics[width=9cm]{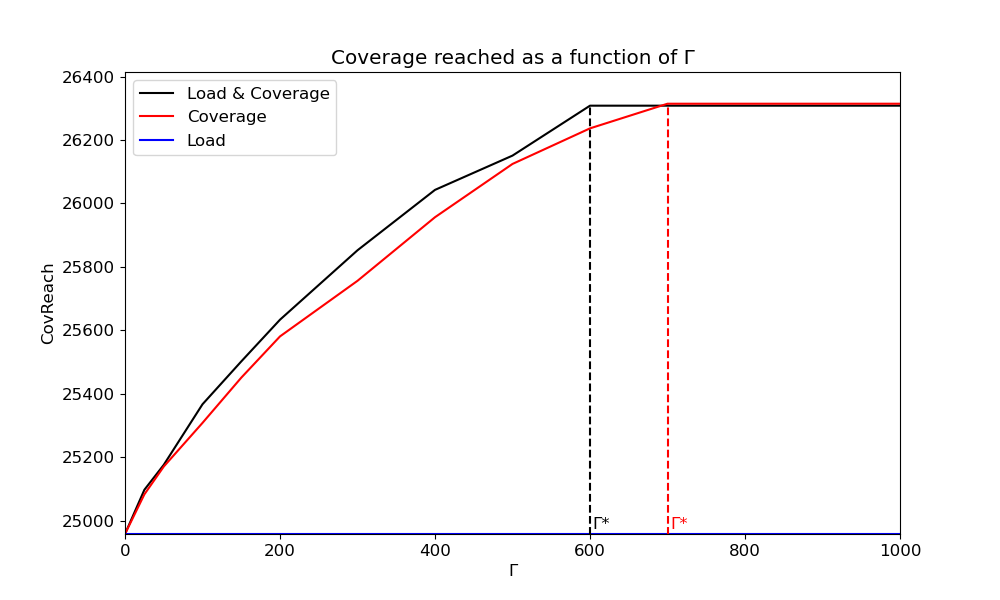}
    \caption[Solution profiles]{Plot of the variation of the average load (on the left) and the coverage reached (on the right) based on $\Gamma$ values for an instance having 1000 customers, seed 1 and coverage radius 5.5.}
    \label{fig:sensitivity_load_coverage}
\end{figure}

\subsection{Performance analysis}
We use a set of 128 instances of the CPSCLP to compare the performance of different methods and formulations introduced in this paper. The characteristics of each instance, including its optimal value and the number of open facilities at the optimal solution, are reported in Table \ref{tab:instances} of the Appendix \ref{sec:appendix_tables}. We use the following values for the protection level $\Gamma$: $2.5\%|J|, 5\%|J|, 10\%|J|, 15\%|J|, 20\%|J|, 30\%|J|, 40\%|J|, 50\%|J|$. We limit $\Gamma$ at 50\% of the number of customers to avoid overly conservative models.

The purpose of this section is to evaluate the performance of our Benders approaches as possible competitive exact algorithms to solve robust instances of the CPSCLP. To the best of our knowledge, no previous computational study appeared in the literature for such a problem, despite its potential theoretical and practical relevance. For this reason, in this section we compare the performance of our Benders algorithms against the direct use of Gurobi applied as a black-box MISOCP solver to the perspective reformulation of the robust counterpart of the problem. We also test Gurobi on the extended MIQP formulation of the robust counterpart of the problem to show the effectiveness of the perspective reformulation. We choose to compare our Benders algorithms to Gurobi as it is considered one of the state-of-the-art MIP solvers and is usually used as benchmark to compare the performance of newly developed exact algorithms. Moreover, Gurobi allows the implementation of callbacks on problems with quadratic constraints, a necessary feature for the implementation of our single-tree approaches.

We discuss the results given by six exact algorithms: Gurobi on the extended MIQP formulation \eqref{whole_extended} (denoted with MIQP from now on), Gurobi on the perspective MISOCP reformulation \eqref{whole_perspective} (denoted with MISOCP from now on), and four versions of our Benders algorithms directly on the perspective reformulation: the single-tree version (ST-BEN) and the single-tree version using the $\epsilon$-technique (ST$\epsilon$-BEN), the multi-tree version (MT-BEN) and the multi-tree version using the $\epsilon$-technique (MT$\epsilon$-BEN). Numerical results are reported in Tables \ref{tab:results_CPSCLP_1}-\ref{tab:results_CPSCLP_3} of the Appendix \ref{sec:appendix_tables}. Using the values reported in these tables, we made a graphical representation of the overall performance of the different exact algorithms in Figure \ref{fig:sol_profile}, illustrating the percentage of problems solved by each algorithm as a function of the computational time. It is important to note that these solution profiles do not represent the cumulative solution time but show the percentage of problems the algorithm can solve within a certain amount of time. The best performance are graphically represented by the curves in the upper part of the plot. From this representation, we can also infer the percentage of instances solved by each algorithm.

\begin{figure}[h!]
    \centering
    \includegraphics[width=10cm]{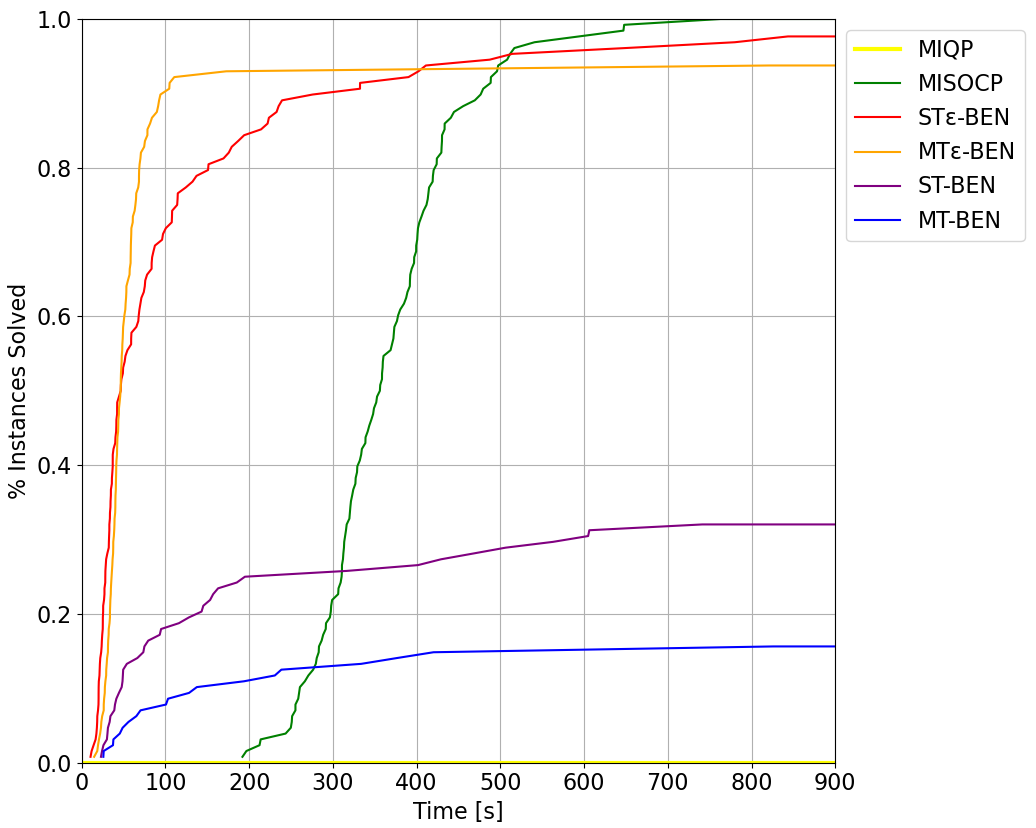}
    \caption[Solution profiles]{Solution profiles considering the whole benchmark set of 128 instances.}
    \label{fig:sol_profile}
\end{figure}

In particular, the profiles show that the MIQP model behaves poorly for this problem as none of the instances can be solved by this method and the relative gaps at the end of the optimization are pretty high (all over 50\%). The second and third worst performing methods for what concerns the number of solved instances are the Benders approaches without the $\epsilon$-technique. Indeed, 
ST-BEN solved 41 instances (around 32\%) and MT-BEN solved 20 instances (around 16\%). The Benders methods with $\epsilon$-technique, instead, solved most of the instances: ST$\epsilon$-BEN solved 125 instances (around 98\%) and MT$\epsilon$-BEN solved 120 instances (around 94\%). Finally, MISOCP solved all 128 instances.

As for resolution times, the algorithms running on the perspective reformulation are much more faster than the one running on the extended formulation: indeed, we cannot solve any instance within the time limit using the MIQP model. The fastest methods are ST$\epsilon$-BEN and MT$\epsilon$-BEN in a large percentage of the instances considered (around 97\%), and MISOCP in the remaining ones. With ST$\epsilon$-BEN, times of MISOCP are reduced on average by almost 67\%, with MT$\epsilon$-BEN, times of MISOCP are reduced on average by almost 75\%. We also remark that the two Benders methods using the $\epsilon$-technique converge to the same optimal solution of MISOCP for the chosen value of $\epsilon$.

For what concerns the direct comparison between ST$\epsilon$-BEN and MT$\epsilon$-BEN, we can notice that both approaches are very effective, with MT$\epsilon$-BEN being slightly superior in terms of solution times, and ST$\epsilon$-BEN being slightly superior in terms of the number of solved instances. 

From Figure \ref{fig:boxplot_bcuts} we further compare the Benders approaches with respect to the number of cuts generated (on integral and fractional points for the single-tree approach, only at integral points for the multi-tree approach). 
From the comparison of the box plots we can assess that the generation of cuts is reduced in both single and multi-tree approaches when using the $\epsilon$-technique, especially for the single-tree. 
The number of Benders cuts generated on average by the multi-tree approach is reduced by the $\epsilon$-technique by almost 75\%, and in the single-tree approach by 83\%. Furthermore, using the $\epsilon$-technique the number of cuts generated remains stable, while when it is not used the number of cuts reaches high values (up to 522 cuts for ST-BEN) in several instances.

\begin{figure}[h!]
    \centering
    \includegraphics[width=10cm]{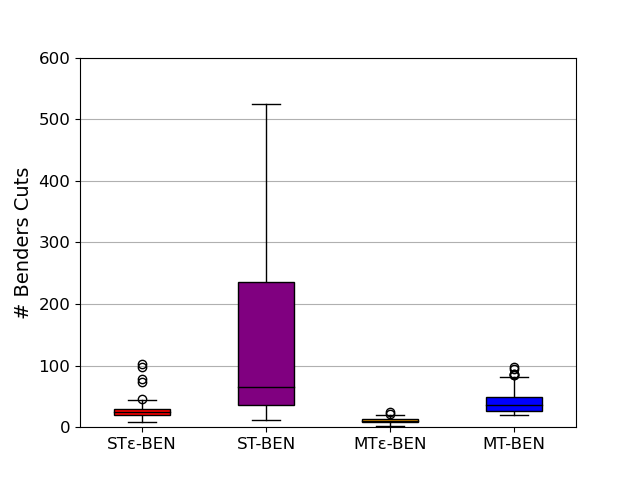}
    \caption[Boxplot of the number of Benders cuts]{Boxplot of the number of Benders cuts generated by Benders decomposition methods.} 
    \label{fig:boxplot_bcuts}
\end{figure}

In Figures \ref{fig:sol_profiles_gamma_1}-\ref{fig:sol_profiles_gamma_2} we report the solution profiles for each fixed value of $\Gamma$ to assess how $\Gamma$ affects the performance of each solution algorithm. 
\begin{figure}[h!]
    \centering
    \includegraphics[width=9cm]{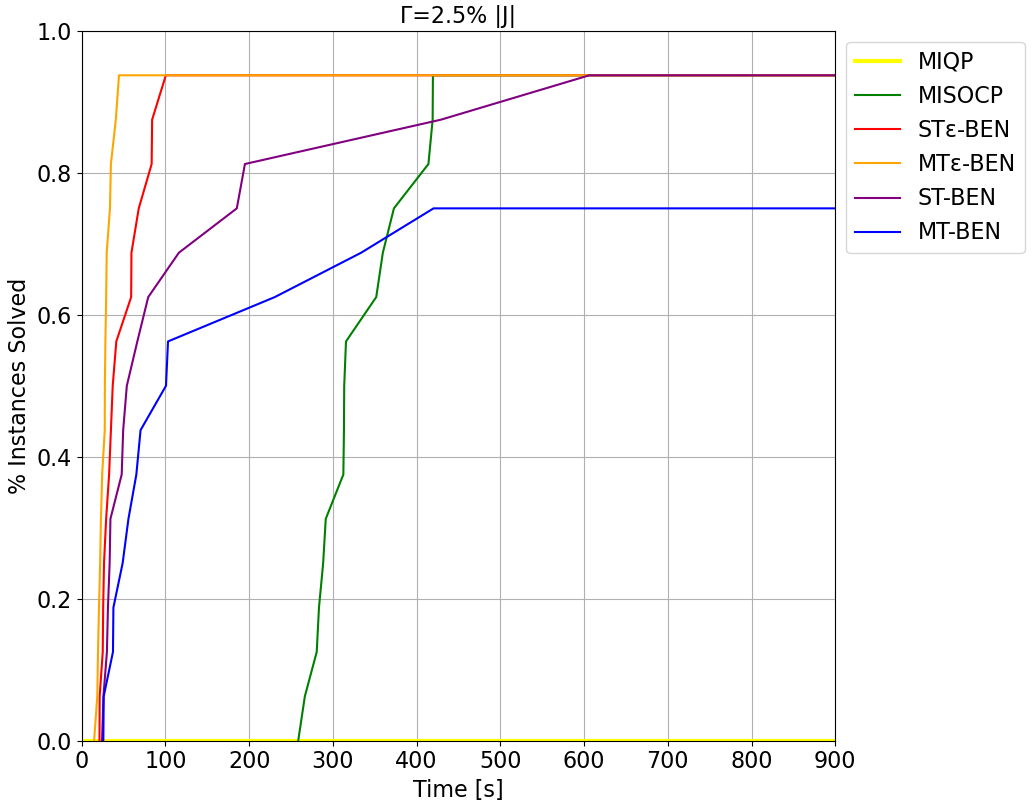}\includegraphics[width=9cm]{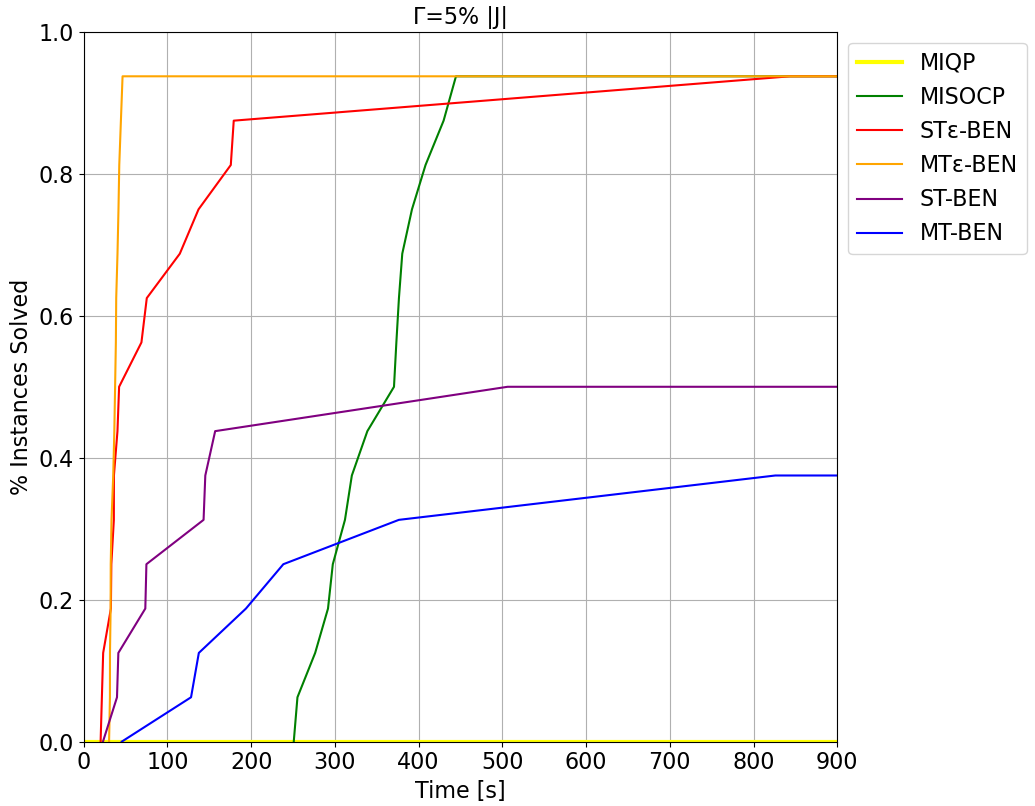}\\
    \includegraphics[width=9cm]{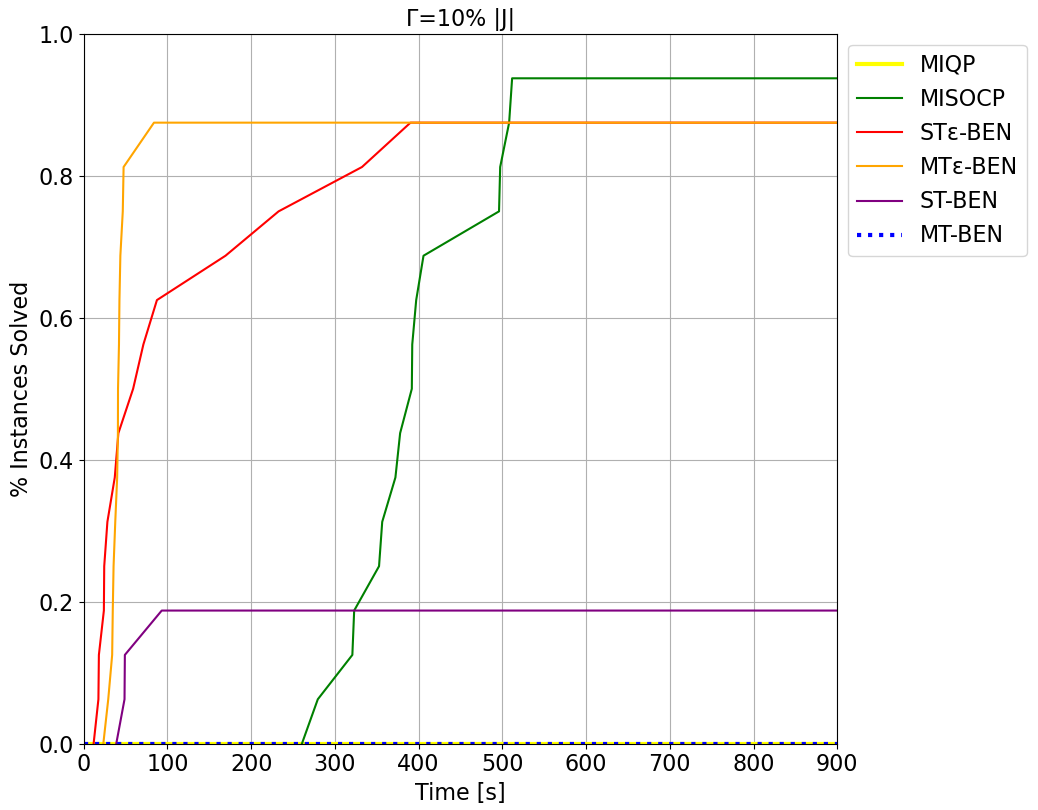}\includegraphics[width=9cm]{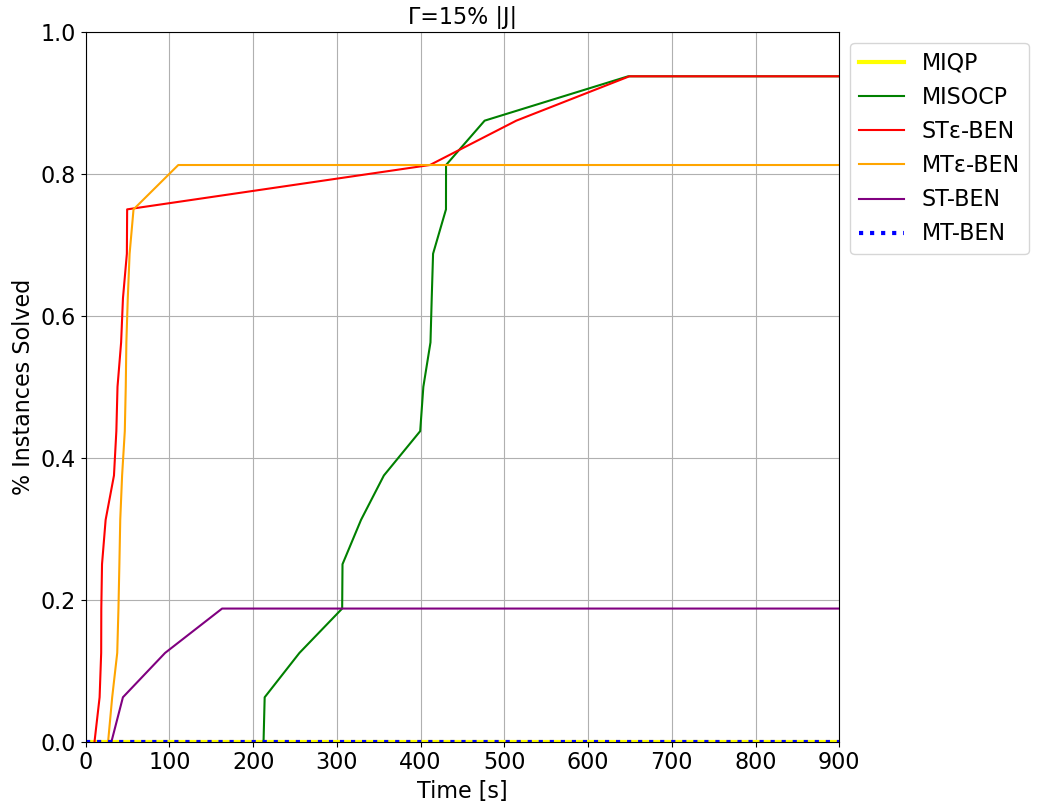}
    \caption[Solution profiles for low values of $\Gamma$]{Solution profiles for low values of $\Gamma$.}
    \label{fig:sol_profiles_gamma_1}
\end{figure}
\begin{figure}[h!]
    \centering
    \includegraphics[width=9cm]{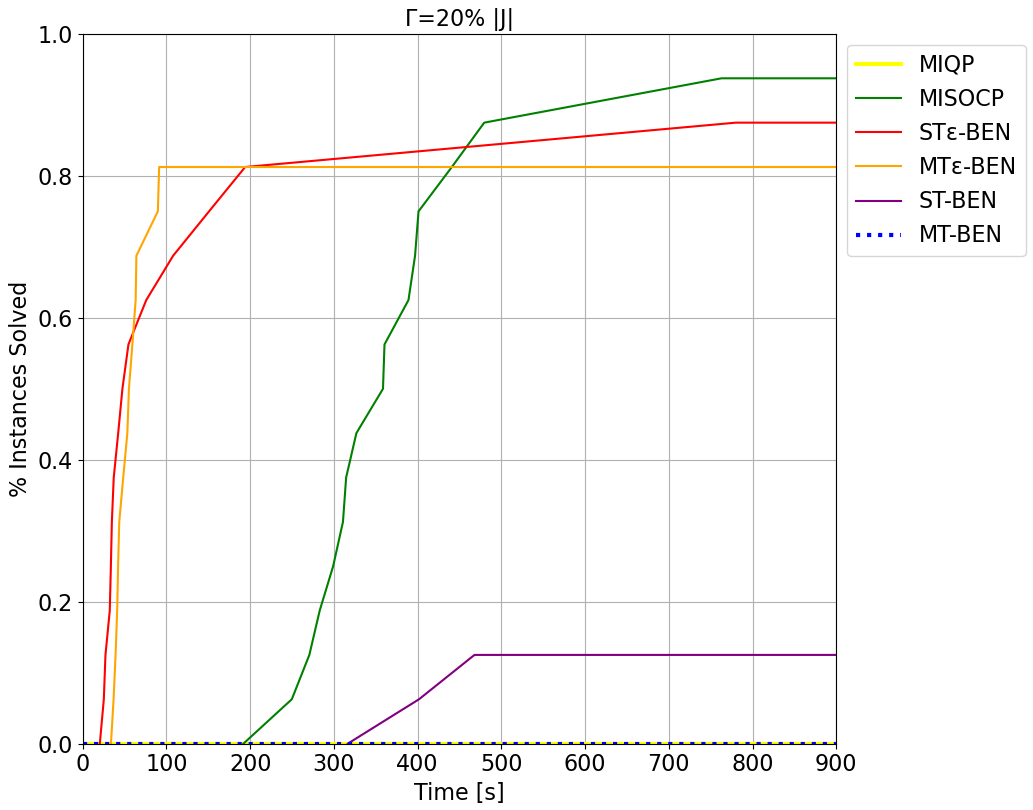}\includegraphics[width=9cm]{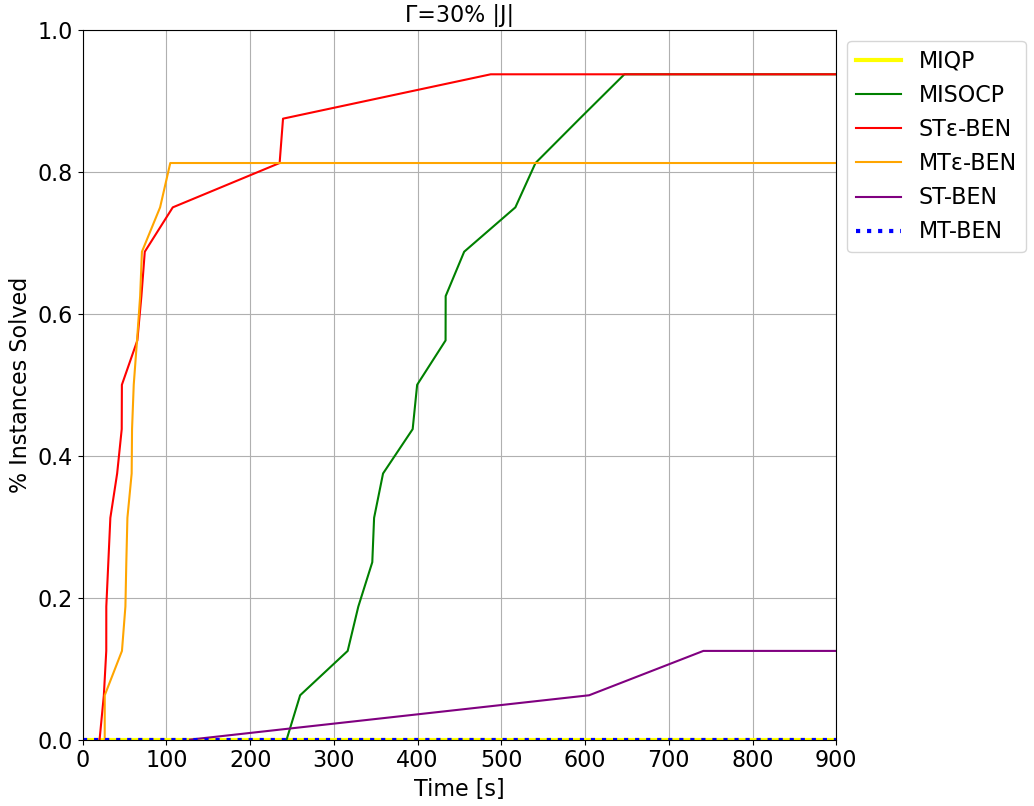}\\\includegraphics[width=9cm]{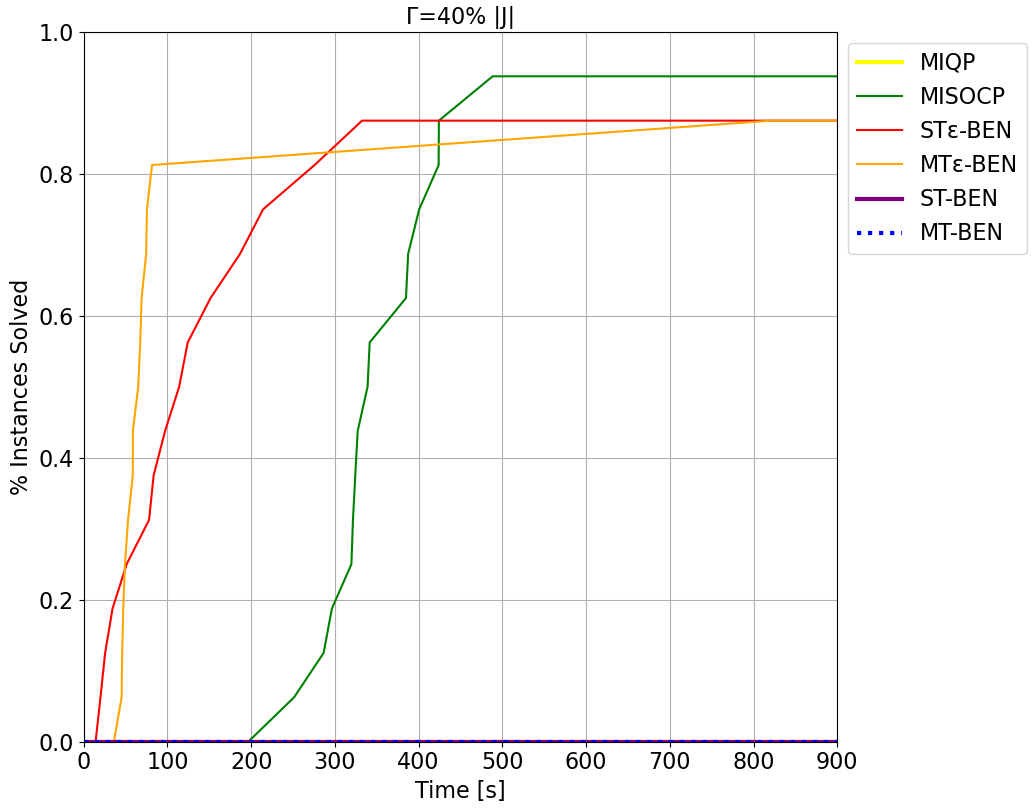}\includegraphics[width=9cm]{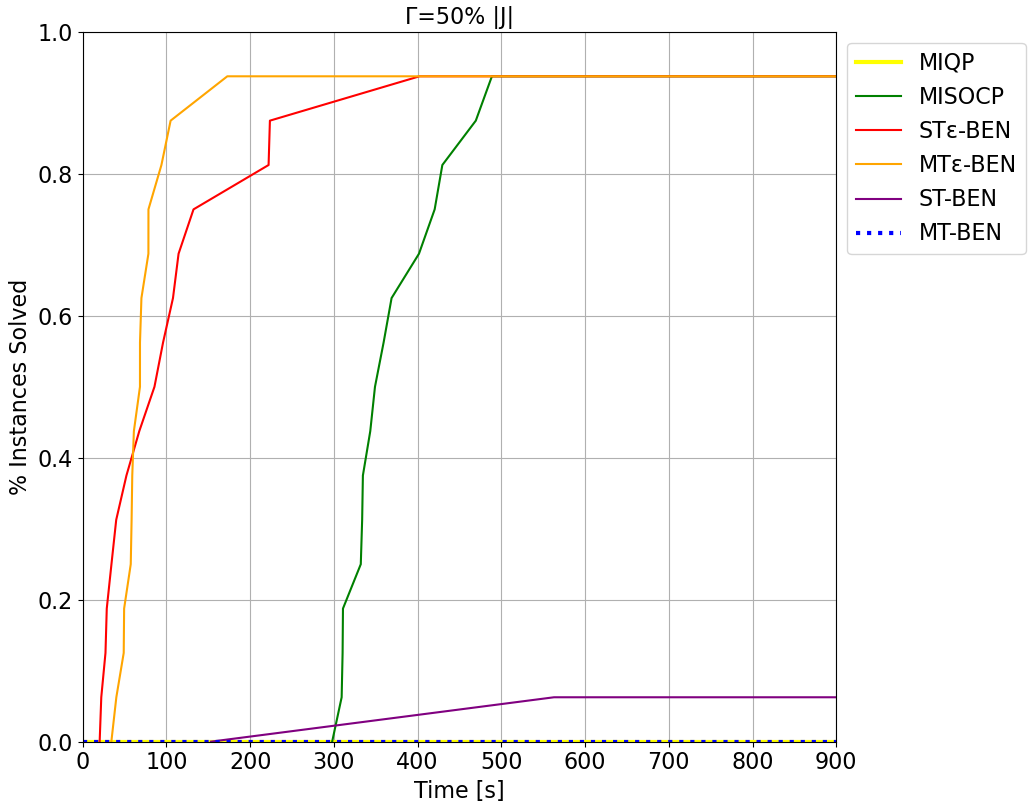}
    \caption[Solution profiles for medium to high values of $\Gamma$]{Solution profiles for medium to high values of $\Gamma$.}
    \label{fig:sol_profiles_gamma_2}
\end{figure}
From these figures, ST$\epsilon$-BEN and MT$\epsilon$-BEN consistently emerge as the best-performing methods across all tested values of $ \Gamma $. Specifically, MT$\epsilon$-BEN significantly outperforms ST$\epsilon$-BEN for very low values of $ \Gamma $ ($ \Gamma = \{2.5\%|J|, 5\%|J|\} $). 
As for Benders methods that do not employ the $\epsilon$-technique, they exhibit good performance at low $\Gamma$ values. Specifically, ST-BEN outperforms MISOCP in more than 80\% of instances when $\Gamma = 2.5\%|J|$ and in over 40\% of instances when $\Gamma = 5\%|J|$. Similarly, MT-BEN beats MISOCP in 70\% of cases having $\Gamma = 2.5\%|J|$ and in nearly 30\% when $\Gamma = 5\%|J|$. However, their performance drastically declines as $\Gamma$ increases, indicating a high limitation in their scalability with respect to increasing levels of protection against uncertainty.

\section{Conclusions}\label{ch_conclusions}
We presented a novel investigation into solving a robust and convex quadratic variant of the partial set covering location problem using modern Benders decomposition. 
Our new model achieves a better resource allocation and more balanced solutions to facility location problems affected by congestion and demand uncertainty. We investigated the case with demand uncertainty affecting both feasibility and optimality using $\Gamma$-robustness. We showed how a simple implementation of Benders decomposition is successful on a perspective reformulation of the robust counterpart of the problem, describing in detail all implementation ingredients we used to design an effective code
embedded in a modern MISOCP solver. Even though Benders decomposition is a traditional technique, the innovation of our proposal consists in being one of the first to use callbacks in combination with quadratic constraints, a new option provided by (few) state-of-the-art MISOCP solvers. Furthermore, we were able to demonstrate the effectiveness of Benders decomposition, even though the Benders subproblem is non-separable. 
We compared single and multi-tree implementations of Benders decomposition with the default branch-and-cut of a state-of-the-art MISOCP solver, also testing the use of a novel ad hoc cut-strengthening technique for degenerate subproblems. Using this technique we were able to reduce computational times of Gurobi on average by 75\% with the multi-tree and 67\% with the single-tree procedure on 128 large-size instances derived from the literature. We also conducted a sensitivity analysis to understand how changes in the protection level $\Gamma$ influence the optimal solutions of the instances. Our findings indicate that $\Gamma$ has a significant impact on the overall cost, the number of facilities to open, the coverage reached and the facility load, suggesting the unreliability of a deterministic setting of our problem and the importance of planning decisions that are robust towards the uncertainty. 

\bibliographystyle{abbrv}

\begin{thebibliography}{10}

\bibitem{aboolian2012profit}
R.~Aboolian, O.~Berman, and D.~Krass.
\newblock Profit maximizing distributed service system design with congestion and elastic demand.
\newblock {\em Transportation Science}, 46(2):247--261, 2012.

\bibitem{akturk2009strong}
M.~S. Akt{\"u}rk, A.~Atamt{\"u}rk, and S.~G{\"u}rel.
\newblock A strong conic quadratic reformulation for machine-job assignment with controllable processing times.
\newblock {\em Operations Research Letters}, 37(3):187--191, 2009.

\bibitem{atta2023improved}
S.~Atta.
\newblock An improved harmony search algorithm using opposition-based learning and local search for solving the maximal covering location problem.
\newblock {\em Engineering Optimization}, pages 1--20, 2023.

\bibitem{atta2018solving}
S.~Atta, P.~R. Sinha~Mahapatra, and A.~Mukhopadhyay.
\newblock Solving maximal covering location problem using genetic algorithm with local refinement.
\newblock {\em Soft Computing}, 22:3891--3906, 2018.

\bibitem{avella2023off}
P.~Avella, A.~Calamita, and L.~Palagi.
\newblock A computational study of off-the-shelf minlp solvers on a benchmark set of congested capacitated facility location problems.
\newblock {\em arXiv preprint arXiv:2303.04216}, 2023.

\bibitem{benders1962partitioning}
J.~F. Benders.
\newblock Partitioning procedures for solving mixed-variables programming problems.
\newblock {\em Numerische Mathematik}, 4(1):238--252, 1962.

\bibitem{berman2003gradual}
O.~Berman, D.~Krass, and Z.~Drezner.
\newblock The gradual covering decay location problem on a network.
\newblock {\em European Journal of Operational Research}, 151(3):474--480, 2003.

\bibitem{bertsimas2003robust}
D.~Bertsimas and M.~Sim.
\newblock Robust discrete optimization and network flows.
\newblock {\em Mathematical programming}, 98(1):49--71, 2003.

\bibitem{bilal2014iterated}
N.~Bilal, P.~Galinier, and F.~Guibault.
\newblock An iterated-tabu-search heuristic for a variant of the partial set covering problem.
\newblock {\em Journal of Heuristics}, 20(2):143--164, 2014.

\bibitem{boffey2007review}
B.~Boffey, R.~Galvao, and L.~Espejo.
\newblock A review of congestion models in the location of facilities with immobile servers.
\newblock {\em European Journal of Operational Research}, 178(3):643--662, 2007.

\bibitem{chen2023efficient}
L.~Chen, S.-J. Chen, W.-K. Chen, Y.-H. Dai, T.~Quan, and J.~Chen.
\newblock Efficient presolving methods for solving maximal covering and partial set covering location problems.
\newblock {\em European Journal of Operational Research}, 2023.

\bibitem{christensen2021fast}
T.~R.~L. Christensen and A.~Klose.
\newblock A fast exact method for the capacitated facility location problem with differentiable convex production costs.
\newblock {\em European Journal of Operational Research}, 292(3):855--868, 2021.

\bibitem{church1974maximal}
R.~Church and C.~ReVelle.
\newblock The maximal covering location problem.
\newblock In {\em Papers of the regional science association}, volume~32, pages 101--118. Springer-Verlag Berlin/Heidelberg, 1974.

\bibitem{cordeau2019benders}
J.-F. Cordeau, F.~Furini, and I.~Ljubi{\'c}.
\newblock Benders decomposition for very large scale partial set covering and maximal covering location problems.
\newblock {\em European Journal of Operational Research}, 275(3):882--896, 2019.

\bibitem{daskin1999two}
M.~S. Daskin and S.~H. Owen.
\newblock Two new location covering problems: The partial p-center problem and the partial set covering problem.
\newblock {\em Geographical Analysis}, 31(3):217--235, 1999.

\bibitem{desrochers1995congested}
M.~Desrochers, P.~Marcotte, and M.~Stan.
\newblock The congested facility location problem.
\newblock {\em Location Science}, 3(1):9--23, 1995.

\bibitem{downs1996exact}
B.~T. Downs and J.~D. Camm.
\newblock An exact algorithm for the maximal covering problem.
\newblock {\em Naval Research Logistics (NRL)}, 43(3):435--461, 1996.

\bibitem{fischetti2016benders}
M.~Fischetti, I.~Ljubi{\'c}, and M.~Sinnl.
\newblock {Benders decomposition without separability: A computational study for capacitated facility location problems}.
\newblock {\em European Journal of Operational Research}, 253(3):557--569, 2016.

\bibitem{fischetti2017redesigning}
M.~Fischetti, I.~Ljubi{\'c}, and M.~Sinnl.
\newblock {Redesigning Benders decomposition for large-scale facility location}.
\newblock {\em Management Science}, 63(7):2146--2162, 2017.

\bibitem{fischetti2010note}
M.~Fischetti, D.~Salvagnin, and A.~Zanette.
\newblock {A note on the selection of Benders’ cuts}.
\newblock {\em Mathematical Programming}, 124:175--182, 2010.

\bibitem{frangioni2006perspective}
A.~Frangioni and C.~Gentile.
\newblock Perspective cuts for a class of convex 0--1 mixed integer programs.
\newblock {\em Mathematical Programming}, 106:225--236, 2006.

\bibitem{frangioni2007sdp}
A.~Frangioni and C.~Gentile.
\newblock Sdp diagonalizations and perspective cuts for a class of nonseparable miqp.
\newblock {\em Operations Research Letters}, 35(2):181--185, 2007.

\bibitem{frangioni2009computational}
A.~Frangioni and C.~Gentile.
\newblock {A computational comparison of reformulations of the perspective relaxation: SOCP vs. cutting planes}.
\newblock {\em Operations Research Letters}, 37(3):206--210, 2009.

\bibitem{frangioni2008tighter}
A.~Frangioni, C.~Gentile, and F.~Lacalandra.
\newblock Tighter approximated milp formulations for unit commitment problems.
\newblock {\em IEEE Transactions on Power Systems}, 24(1):105--113, 2008.

\bibitem{galvao2000comparison}
R.~D. Galv{\~a}o, L.~G.~A. Espejo, and B.~Boffey.
\newblock A comparison of lagrangean and surrogate relaxations for the maximal covering location problem.
\newblock {\em European Journal of Operational Research}, 124(2):377--389, 2000.

\bibitem{galvao1996lagrangean}
R.~D. Galv{\~a}o and C.~ReVelle.
\newblock A lagrangean heuristic for the maximal covering location problem.
\newblock {\em European Journal of Operational Research}, 88(1):114--123, 1996.

\bibitem{gunluk2008perspective}
O.~G{\"u}nl{\"u}k and J.~Linderoth.
\newblock Perspective relaxation of mixed integer nonlinear programs with indicator variables.
\newblock In {\em International Conference on Integer Programming and Combinatorial Optimization}, pages 1--16. Springer, 2008.

\bibitem{gunluk2012perspective}
O.~G{\"u}nl{\"u}k and J.~Linderoth.
\newblock Perspective reformulation and applications.
\newblock In {\em Mixed Integer Nonlinear Programming}, pages 61--89. Springer, 2012.

\bibitem{gurobiWebsite}
{Gurobi Optimization}.
\newblock {MIQCPMethod}, January 2023.
\newblock \url{https://www.gurobi.com/documentation/10.0/refman/miqcpmethod.html}.

\bibitem{gurobi}
{Gurobi Optimization, LLC}.
\newblock {Gurobi Optimizer Reference Manual}, 2023.
\newblock \url{https://www.gurobi.com}.

\bibitem{hakimi1965optimum}
S.~L. Hakimi.
\newblock Optimum distribution of switching centers in a communication network and some related graph theoretic problems.
\newblock {\em Operations Research}, 13(3):462--475, 1965.

\bibitem{harkness2003facility}
J.~Harkness and C.~ReVelle.
\newblock Facility location with increasing production costs.
\newblock {\em European Journal of Operational Research}, 145(1):1--13, 2003.

\bibitem{krarup1987optimal}
J.~Krarup, M.~Labb{\'e}, and G.~Rand.
\newblock Optimal location: minimum versus equilibrium allocation.
\newblock In {\em Operational research'87}, pages 718--729. North-Holland, 1987.

\bibitem{ljubic2012exact}
I.~Ljubi{\'c}, P.~Putz, and J.-J. Salazar-Gonz{\'a}lez.
\newblock Exact approaches to the single-source network loading problem.
\newblock {\em Networks}, 59(1):89--106, 2012.

\bibitem{lorena2002lagrangean}
L.~A. Lorena and M.~A. Pereira.
\newblock A lagrangean/surrogate heuristic for the maximal covering location problem using hillman's edition.
\newblock {\em International Journal of Industrial Engineering}, 9:57--67, 2002.

\bibitem{lu2010facility}
D.~Lu.
\newblock Facility location with economies of scale and congestion.
\newblock Master's thesis, University of Waterloo, 2010.

\bibitem{magnanti1981accelerating}
T.~L. Magnanti and R.~T. Wong.
\newblock {Accelerating Benders decomposition: Algorithmic enhancement and model selection criteria}.
\newblock {\em Operations Research}, 29(3):464--484, 1981.

\bibitem{maximo2023hybrid}
V.~R. M{\'a}ximo, J.-F. Cordeau, and M.~C. Nascimento.
\newblock A hybrid adaptive iterated local search heuristic for the maximal covering location problem.
\newblock {\em International Transactions in Operational Research}, 2023.

\bibitem{maximo2019intensification}
V.~R. M{\'a}ximo and M.~C. Nascimento.
\newblock Intensification, learning and diversification in a hybrid metaheuristic: an efficient unification.
\newblock {\em Journal of Heuristics}, 25:539--564, 2019.

\bibitem{maximo2017intelligent}
V.~R. M{\'a}ximo, M.~C. Nascimento, and A.~C. Carvalho.
\newblock Intelligent-guided adaptive search for the maximum covering location problem.
\newblock {\em Computers \& Operations Research}, 78:129--137, 2017.

\bibitem{megiddo1983maximum}
N.~Megiddo, E.~Zemel, and S.~L. Hakimi.
\newblock The maximum coverage location problem.
\newblock {\em SIAM Journal on Algebraic Discrete Methods}, 4(2):253--261, 1983.

\bibitem{papadakos2008practical}
N.~Papadakos.
\newblock {Practical enhancements to the Magnanti--Wong method}.
\newblock {\em Operations Research Letters}, 36(4):444--449, 2008.

\bibitem{rahmaniani2017benders}
R.~Rahmaniani, T.~G. Crainic, M.~Gendreau, and W.~Rei.
\newblock The benders decomposition algorithm: A literature review.
\newblock {\em European Journal of Operational Research}, 259(3):801--817, 2017.

\bibitem{resende1998computing}
M.~G. Resende.
\newblock Computing approximate solutions of the maximum covering problem with grasp.
\newblock {\em Journal of Heuristics}, 4(2):161--177, 1998.

\bibitem{revelle2008solving}
C.~ReVelle, M.~Scholssberg, and J.~Williams.
\newblock Solving the maximal covering location problem with heuristic concentration.
\newblock {\em Computers \& Operations Research}, 35(2):427--435, 2008.

\bibitem{cselfun2011outer}
S.~{\c{S}}elfun.
\newblock Outer approximation algorithms for the congested p-median problem.
\newblock Master's thesis, Bilkent Universitesi (Turkey), 2011.

\bibitem{senne2010decomposition}
E.~L.~F. Senne, M.~A. Pereira, L.~A.~N. Lorena, et~al.
\newblock A decomposition heuristic for the maximal covering location problem.
\newblock {\em Advances in Operations Research}, 2010, 2010.

\bibitem{snyder2006facility}
L.~V. Snyder.
\newblock Facility location under uncertainty: a review.
\newblock {\em IIE transactions}, 38(7):547--564, 2006.

\bibitem{snyder2011covering}
L.~V. Snyder.
\newblock Covering problems.
\newblock {\em Foundations of location analysis}, pages 109--135, 2011.

\bibitem{toregas1971location}
C.~Toregas, R.~Swain, C.~ReVelle, and L.~Bergman.
\newblock The location of emergency service facilities.
\newblock {\em Operations Research}, 19(6):1363--1373, 1971.

\bibitem{walker1974using}
W.~Walker.
\newblock Using the set-covering problem to assign fire companies to fire houses.
\newblock {\em Operations Research}, 22(2):275--277, 1974.

\bibitem{zhang2009incorporating}
Y.~Zhang, O.~Berman, and V.~Verter.
\newblock Incorporating congestion in preventive healthcare facility network design.
\newblock {\em European Journal of Operational Research}, 198(3):922--935, 2009.

\end{thebibliography}


\section*{Acknowledgements}
Alice Calamita acknowledges financial support from Progetto di Ricerca Medio Sapienza Uniroma1 - n. RM12117A857F64F2. 
Laura Palagi acknowledges financial support from Progetto di Ricerca Medio Sapienza Uniroma1 - n. RM1221816BAE8A79.

\appendix
\section{Appendix}
\label{sec:appendix_proof}

We provide the proof to the Theorem \ref{th:appendix}, following what is done in Theorem 1 of \cite{bertsimas2003robust}.


\begin{theorem}
    The extended formulation \eqref{whole_extended} is equivalent to \eqref{new_def_v}.
\end{theorem}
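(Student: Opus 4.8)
The plan is to follow the classical $\Gamma$-robustness argument of Bertsimas and Sim \cite{bertsimas2003robust}: reinterpret each combinatorial inner maximization in \eqref{new_def_v} as a linear program, dualize it, and then exploit the elementary fact that a constraint of the form $a\ge\min_{w\in W}g(w)$ holds if and only if there exists some $w\in W$ with $a\ge g(w)$. First I would fix a facility $i\in I$ and an allocation $x$ and observe that, because $\Gamma$ is a nonnegative integer,
\[
\max_{\{S:\,S\subseteq J,\ |S|\le\Gamma\}}\ \sum_{j\in S}\hat d_j x_{ij}
\;=\;\max\Big\{\textstyle\sum_{j\in J}\hat d_j x_{ij}\,z_j\ :\ \textstyle\sum_{j\in J}z_j\le\Gamma,\ 0\le z_j\le 1\Big\}.
\]
This holds since the coefficients $\hat d_j x_{ij}$ are nonnegative (the LP just picks the $\Gamma$ largest of them), or equivalently since the polytope $\{z\in[0,1]^{|J|}:\sum_j z_j\le\Gamma\}$ has only integral vertices when $\Gamma\in\mathbb Z$. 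The same identity holds verbatim for the single shared selection set appearing in \eqref{constraint_on_D}, with the coefficient $\sum_{i\in I}\hat d_j x_{ij}$ in place of $\hat d_j x_{ij}$.

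Next I would apply LP strong duality to these relaxations. The dual of the facility-$i$ LP is $\min\{\Gamma\rho_i+\sum_{j}\sigma_{ij}:\rho_i+\sigma_{ij}\ge\hat d_j x_{ij},\ \rho_i,\sigma_{ij}\ge0\}$, and the dual of the covering LP is $\min\{\Gamma\tau+\sum_j\pi_j:\tau+\pi_j\ge\sum_i\hat d_j x_{ij},\ \tau,\pi_j\ge0\}$. Hence \eqref{constraint_on_v} reads $v_i-\sum_j d_j x_{ij}\ge\min\{\,\cdot\,\}$, which is equivalent to the existence of $(\rho_i,\sigma_{ij})$ satisfying the dual constraints together with $v_i-\sum_j d_j x_{ij}\ge\Gamma\rho_i+\sum_j\sigma_{ij}$ --- that is, exactly \eqref{first} plus $\rho_i+\sigma_{ij}\ge\hat d_j x_{ij}$. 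Likewise, \eqref{constraint_on_D} becomes \eqref{second} plus $\tau+\pi_j\ge\sum_i\hat d_j x_{ij}$. Since the objective $\sum_i f_i y_i+\sum_i F_i(v_i)$ and the remaining constraints are left untouched, this already suggests that the two models coincide.

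To make this rigorous I would argue by two inclusions of the feasible sets (projecting \eqref{whole_extended} onto the variables $(x,y,v)$). For the direction \eqref{new_def_v} $\Rightarrow$ \eqref{whole_extended}, given a feasible $(x,y,v)$ I would choose $(\rho,\sigma)$ and $(\tau,\pi)$ to be \emph{optimal} dual solutions of the LPs above --- these exist and attain the maxima by strong duality --- so the added linear inequalities hold. For the converse, given any feasible $(x,y,v,\tau,\rho,\pi,\sigma)$ of \eqref{whole_extended}, \emph{weak} duality alone yields $\Gamma\rho_i+\sum_j\sigma_{ij}\ge\max_{S}\sum_{j\in S}\hat d_j x_{ij}$ and $\Gamma\tau+\sum_j\pi_j\ge\max_S\sum_i\sum_{j\in S}\hat d_j x_{ij}$, so \eqref{first} and \eqref{second} imply \eqref{constraint_on_v} and \eqref{constraint_on_D}; equality of the objective values is immediate since the objective depends only on $(y,v)$. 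The only delicate point is the first step: one must justify that the combinatorial maximization equals its LP relaxation (so that LP duality is even applicable), and recognize that the relaxed formulation encodes the \emph{existence} of dual-feasible multipliers rather than their optimality, which is precisely why no feasible solutions are gained or lost.
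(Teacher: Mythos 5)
Your proposal is correct and follows essentially the same route as the paper: rewrite each inner maximization as an LP (justified by integrality of the knapsack polytope for integer $\Gamma$), dualize, and invoke strong duality to replace the protection function by dual-feasible multipliers and their objective bound. Your explicit two-inclusion argument (optimal duals for one direction, weak duality for the converse) just spells out what the paper's ``can be equivalently formulated as'' leaves implicit.
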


\begin{proof}
    
We show how to get formulation \eqref{whole_extended}.

Given $i \in I$ and a vector $x_i^{\star}\in\R^{|J|}$, we define the protection function $\alpha$ representing the sum of the $\Gamma$ largest deviations for $i \in I$ as
\begin{equation*}
    \alpha(x_i^{\star}) = \max_{\{S: \ S \subseteq J, |S| \leq \Gamma\}}\left\{\sum_{j \in S} \hat d_j x^{\star}_{ij} \right\}.
\end{equation*}
By introducing the binary variables $w_{ij}$, the protection function $\alpha$ can be equivalently formulated as
\begin{equation*}
\begin{array}{rlr}
    \alpha(x_i^{\star}) = \max & \displaystyle \sum_{j \in J} \hat d_j x^{\star}_{ij}w_{ij}\\
    s.t. & \displaystyle \sum_{j \in J} w_{ij} \leq \Gamma\\
    & w_{ij} \in \{0, 1\} & \quad j \in J
    \end{array}
\end{equation*}
which equals to
\begin{equation}\label{alpha_primal}
\begin{array}{rlr}
     \alpha(x_i^{\star}) = \max & \displaystyle \sum_{j \in J} \hat d_j x^{\star}_{ij}w_{ij}\\
    s.t. & \displaystyle \sum_{j \in J} w_{ij} \leq \Gamma\\ 
    & 0 \leq w_{ij} \leq 1 & \quad j \in J 
    \end{array}
\end{equation}
where we relaxed the integrality of variables $w_{ij}$ since the matrix of the constraints is totally unimodular and $\Gamma$ is integral (linear programs of this form have integral optima). 
Clearly the optimal solution of Problem \eqref{alpha_primal} consists of $\Gamma$ variables at 1. Associating multipliers $\rho_i$ to the knapsack constraint and  $\sigma_{ij}$ for  $j \in J$ to the upper bound constraints, we consider the dual of Problem \eqref{alpha_primal}

\begin{equation}\label{alpha_dual}
\begin{array}{rlr}
     \min & \Gamma \rho_i + \displaystyle \sum_{j \in J} \sigma_{ij} \\
    s.t. & \rho_i + \sigma_{ij} \geq \displaystyle \hat d_j x_{ij}^{\star} 
    & \quad j \in J\\
    & \sigma_{ij} \geq 0 & \quad j \in J\\
    & \rho_i \geq 0. 
    \end{array}
\end{equation}
By strong duality, since Problem \eqref{alpha_primal} is feasible and bounded for all integer $\Gamma \in [0,|J|]$, then the dual problem \eqref{alpha_dual} is also feasible and bounded and their objective functions assume the same value in an optimal solution. Hence, $\alpha(x_i^{\star})$ is equal to the optimal value of Problem  \eqref{alpha_dual} for each $i \in I$.
Thus constraints \eqref{constraint_on_v} can be equivalently formulated as
$$
\begin{array}{cc}
\displaystyle  v_i - \sum_{j \in J} d_j x_{ij} - \left(\Gamma \rho_i + \sum_{j \in J} \sigma_{ij}\right) \geq 0 &i \in I   \\
      \rho_i + \sigma_{ij} \geq \displaystyle \hat d_j x_{ij}   & i \in I, \  j \in J\\
     \sigma_{ij} \geq 0 &  i \in I, \ j \in J\\
     \rho_i \geq 0 &i \in I.  
\end{array}
$$

\bigskip

Then, given vectors $x_i^{\star}$ for all $i\in I$, we define the total protection function $\beta$ representing the sum of the $\Gamma$ largest deviations  as:
\begin{equation*}
    \beta(x^{\star}) = \max_{\{S: \ S \subseteq  J, |S| \leq \Gamma\}}\left\{\sum_{i \in I} \sum_{j \in S} \hat d_j x^{\star}_{ij} \right\}.
\end{equation*}
By introducing the binary variables $w_{ij}$, the protection function $\beta$ can be equivalently formulated as
\begin{equation*}
\begin{array}{rlr}
    \beta(x^{\star}) = \max & \displaystyle \sum_{i \in I} \sum_{j \in J} \hat d_j x^{\star}_{ij}w_j\\
    s.t. & \displaystyle \sum_{j \in  J} w_j \leq \Gamma\\
    & w_j \in \{0, 1\} & \quad j \in J
    \end{array}
\end{equation*}
which equals to
\begin{equation}\label{beta_primal_2}
\begin{array}{rlr}
    \beta(x^{\star}) = \max & \displaystyle \sum_{i \in I} \sum_{j \in J} \hat d_j x^{\star}_{ij}w_j\\
    s.t. & \displaystyle \sum_{j \in J} w_j \leq \Gamma \\ 
    & 0 \leq w_j \leq 1 & \quad j \in J 
    \end{array}
\end{equation}
where we relaxed the integrality of variables $w_{ij}$ since the matrix of the constraints is totally unimodular and $\Gamma$ is integral (linear programs of this form have integral optima). 
Clearly the optimal solution of Problem \eqref{beta_primal_2} consists of $\Gamma$ variables at 1. 
We next consider the dual of Problem \eqref{beta_primal_2}. Using the multipliers $\tau\in \R$ for the knapsack constraint and $\pi\in\R^{|J|}$ for the upper bound constraints, we get
\begin{equation}\label{beta_dual_2}
\begin{array}{rlr}
    \min & \Gamma \tau + \displaystyle \sum_{j \in J} \pi_j \\
    s.t. & \tau + \pi_j \geq \displaystyle \sum_{i \in I} \hat d_j x_{ij}^{\star} 
    & \quad j \in J\\
    & \pi_j \geq 0 & \quad j \in J\\
    & \tau \geq 0. 
    \end{array}
\end{equation}
By strong duality, since Problem \eqref{beta_primal_2} is feasible and bounded for all $\Gamma \in [0,|J|]$, then the dual problem \eqref{beta_dual_2} is also feasible and bounded and their objective functions assume the same value in an optimal solution. Hence, $\beta(x^{\star})$ is equal to the optimal value of Problem \eqref{beta_dual_2}.
Thus constraint \eqref{constraint_on_D} can be equivalently formulated as
$$
\begin{array}{lr}
\displaystyle \sum_{i \in I} \sum_{j \in J} d_j x_{ij} - \left(\Gamma \tau + \sum_{j \in J} \pi_j\right) \geq D    &  \\
\tau + \pi_j \geq \displaystyle \sum_{i \in I} \hat d_j x_{ij} 
    & \quad j \in J\\
     \pi_j \geq 0 & \quad j \in J\\
     \tau \geq 0. 
\end{array}
$$
We finally get formulation \eqref{whole_extended}.
\end{proof}
\begin{flushright}
$\square$
\end{flushright}

\section{Appendix}
\label{sec:appendix_tables}

Tables \ref{tab:instances} report the characteristics of the instances of congested partial set covering location. Specifically, for each instance, the following data are reported: a unique identifier ID, a number s linked to the seed of the instance generator, the maximum number of customer deviations $\Gamma$ we consider for the robust setting, the maximum distance of coverage R, the optimal value for instance at hand ObjVal, and the number of open facilities \#Fac at the best feasible solution found on the instance.

\begin{table}[h]
\caption{Characteristics of the testbed.}\label{tab:instances}
\resizebox{\textwidth}{!}{
\begin{tabular}{lrrrrrrlrrrrrllrrrrr}\toprule
\textbf{ID} & \textbf{s} & \textbf{$\bm\Gamma$} & \textbf{R} & \textbf{ObjVal} & \textbf{\#Fac} &  & \multicolumn{1}{c}{\textbf{ID}} & \textbf{s} & \textbf{$\bm\Gamma$} & \textbf{R} & \textbf{ObjVal} & \textbf{\#Fac} &  & \multicolumn{1}{c}{\textbf{ID}} & \textbf{s} & \textbf{$\bm\Gamma$} & \textbf{R} & \textbf{ObjVal} & \textbf{\#Fac} \\
\cmidrule(lr){1-6}\cmidrule(lr){7-13}\cmidrule(lr){14-20}
1           & 1          & 25             & 5.5        & 27239.74        & 41             &  & 43                              & 3          & 100            & 6          & 29245.32        & 36             &  & 86                              & 4          & 300            & 5.75       & 30739.45        & 38             \\
2           & 1          & 25             & 5.75       & 27162.26        & 41             &  & 44                              & 3          & 100            & 6.25       & 29197.75        & 36             &  & 87                              & 4          & 300            & 6          & 30723.60        & 38             \\
3           & 1          & 25             & 6          & 27080.35        & 41             &  & 45                              & 3          & 150            & 5.5        & 29563.69        & 37             &  & 88                              & 4          & 300            & 6.25       & 30707.81        & 38             \\
4           & 1          & 25             & 6.25       & 27005.71        & 41             &  & 46                              & 3          & 150            & 5.75       & 29509.88        & 36             &  & 89                              & 4          & 400            & 5.5        & 30968.95        & 38             \\
5           & 1          & 50             & 5.5        & 27952.05        & 41             &  & 47                              & 3          & 150            & 6          & 29497.71        & 36             &  & 90                              & 4          & 400            & 5.75       & 30941.75        & 38             \\
6           & 1          & 50             & 5.75       & 27856.05        & 41             &  & 48                              & 3          & 150            & 6.25       & 29466.02        & 37             &  & 91                              & 4          & 400            & 6          & 30923.64        & 38             \\
7           & 1          & 50             & 6          & 27769.04        & 41             &  & 49                              & 3          & 200            & 5.5        & 29766.99        & 37             &  & 92                              & 4          & 400            & 6.25       & 30905.46        & 38             \\
8           & 1          & 50             & 6.25       & 27669.14        & 41             &  & 50                              & 3          & 200            & 5.75       & 29705.32        & 37             &  & 93                              & 4          & 500            & 5.5        & 31119.74        & 39             \\
9           & 1          & 100            & 5.5        & 28450.44        & 42             &  & 51                              & 3          & 200            & 6          & 29692.82        & 37             &  & 94                              & 4          & 500            & 5.75       & 31094.42        & 38             \\
10          & 1          & 100            & 5.75       & 28406.33        & 42             &  & 52                              & 3          & 200            & 6.25       & 29660.42        & 37             &  & 95                              & 4          & 500            & 6          & 31076.66        & 38             \\
11          & 1          & 100            & 6          & 28368.57        & 42             &  & 53                              & 3          & 300            & 5.5        & 30089.87        & 37             &  & 96                              & 4          & 500            & 6.25       & 31057.25        & 38             \\
12          & 1          & 100            & 6.25       & 28316.45        & 42             &  & 54                              & 3          & 300            & 5.75       & 30022.47        & 37             &  & 97                              & 5          & 25             & 5.5        & 27614.20        & 40             \\
13          & 1          & 150            & 5.5        & 28680.39        & 42             &  & 55                              & 3          & 300            & 6          & 30010.20        & 37             &  & 98                              & 5          & 25             & 5.75       & 27515.77        & 40             \\
14          & 1          & 150            & 5.75       & 28631.80        & 42             &  & 56                              & 3          & 300            & 6.25       & 29979.06        & 37             &  & 99                              & 5          & 25             & 6          & 27421.87        & 40             \\
15          & 1          & 150            & 6          & 28597.00        & 42             &  & 57                              & 3          & 400            & 5.5        & 30285.64        & 37             &  & 100                             & 5          & 25             & 6.25       & 27346.55        & 40             \\
16          & 1          & 150            & 6.25       & 28557.92        & 42             &  & 58                              & 3          & 400            & 5.75       & 30214.90        & 37             &  & 101                             & 5          & 50             & 5.5        & 28300.76        & 41             \\
17          & 1          & 200            & 5.5        & 28898.38        & 42             &  & 59                              & 3          & 400            & 6          & 30202.67        & 37             &  & 102                             & 5          & 50             & 5.75       & 28207.67        & 40             \\
18          & 1          & 200            & 5.75       & 28845.45        & 42             &  & 60                              & 3          & 400            & 6.25       & 30169.95        & 37             &  & 103                             & 5          & 50             & 6          & 28112.83        & 40             \\
19          & 1          & 200            & 6          & 28807.26        & 42             &  & 61                              & 3          & 500            & 5.5        & 30450.82        & 37             &  & 104                             & 5          & 50             & 6.25       & 28034.17        & 40             \\
20          & 1          & 200            & 6.25       & 28763.63        & 43             &  & 62                              & 3          & 500            & 5.75       & 30372.28        & 37             &  & 105                             & 5          & 100            & 5.5        & 28827.47        & 41             \\
21          & 1          & 300            & 5.5        & 29228.79        & 42             &  & 63                              & 3          & 500            & 6          & 30356.26        & 37             &  & 106                             & 5          & 100            & 5.75       & 28801.14        & 41             \\
22          & 1          & 300            & 5.75       & 29168.65        & 42             &  & 64                              & 3          & 500            & 6.25       & 30321.74        & 37             &  & 107                             & 5          & 100            & 6          & 28763.12        & 41             \\
23          & 1          & 300            & 6          & 29123.02        & 42             &  & 65                              & 4          & 25             & 5.5        & 28769.80        & 36             &  & 108                             & 5          & 100            & 6.25       & 28716.61        & 41             \\
24          & 1          & 300            & 6.25       & 29072.71        & 43             &  & 66                              & 4          & 25             & 5.75       & 28688.22        & 36             &  & 109                             & 5          & 150            & 5.5        & 29086.06        & 41             \\
25          & 1          & 400            & 5.5        & 29477.27        & 43             &  & 67                              & 4          & 25             & 6          & 28626.61        & 36             &  & 110                             & 5          & 150            & 5.75       & 29060.88        & 41             \\
26          & 1          & 400            & 5.75       & 29401.55        & 42             &  & 68                              & 4          & 25             & 6.25       & 28566.13        & 36             &  & 111                             & 5          & 150            & 6          & 29031.55        & 41             \\
27          & 1          & 400            & 6          & 29350.02        & 43             &  & 69                              & 4          & 50             & 5.5        & 29455.33        & 37             &  & 112                             & 5          & 150            & 6.25       & 29000.45        & 41             \\
28          & 1          & 400            & 6.25       & 29295.53        & 43             &  & 70                              & 4          & 50             & 5.75       & 29349.76        & 37             &  & 113                             & 5          & 200            & 5.5        & 29310.83        & 41             \\
29          & 1          & 500            & 5.5        & 29634.08        & 43             &  & 71                              & 4          & 50             & 6          & 29265.07        & 37             &  & 114                             & 5          & 200            & 5.75       & 29284.08        & 41             \\
30          & 1          & 500            & 5.75       & 29559.19        & 43             &  & 72                              & 4          & 50             & 6.25       & 29186.63        & 37             &  & 115                             & 5          & 200            & 6          & 29253.31        & 41             \\
31          & 1          & 500            & 6          & 29504.86        & 43             &  & 73                              & 4          & 100            & 5.5        & 30005.30        & 38             &  & 116                             & 5          & 200            & 6.25       & 29227.89        & 41             \\
32          & 1          & 500            & 6.25       & 29450.37        & 43             &  & 74                              & 4          & 100            & 5.75       & 29978.57        & 37             &  & 117                             & 5          & 300            & 5.5        & 29637.01        & 42             \\
33          & 3          & 25             & 5.5        & 28108.43        & 35             &  & 75                              & 4          & 100            & 6          & 29954.70        & 37             &  & 118                             & 5          & 300            & 5.75       & 29608.42        & 41             \\
34          & 3          & 25             & 5.75       & 28018.51        & 35             &  & 76                              & 4          & 100            & 6.25       & 29926.45        & 37             &  & 119                             & 5          & 300            & 6          & 29577.26        & 41             \\
35          & 3          & 25             & 6          & 27951.82        & 35             &  & 77                              & 4          & 150            & 5.5        & 30240.51        & 38             &  & 120                             & 5          & 300            & 6.25       & 29544.77        & 42             \\
36          & 3          & 25             & 6.25       & 27886.82        & 35             &  & 78                              & 4          & 150            & 5.75       & 30219.40        & 37             &  & 121                             & 5          & 400            & 5.5        & 29863.25        & 42             \\
37          & 3          & 50             & 5.5        & 28789.70        & 36             &  & 79                              & 4          & 150            & 6          & 30204.67        & 37             &  & 122                             & 5          & 400            & 5.75       & 29832.83        & 42             \\
38          & 3          & 50             & 5.75       & 28683.83        & 36             &  & 80                              & 4          & 150            & 6.25       & 30190.79        & 37             &  & 123                             & 5          & 400            & 6          & 29797.02        & 42             \\
39          & 3          & 50             & 6          & 28608.38        & 36             &  & 81                              & 4          & 200            & 5.5        & 30441.22        & 38             &  & 124                             & 5          & 400            & 6.25       & 29763.65        & 42             \\
40          & 3          & 50             & 6.25       & 28531.91        & 36             &  & 82                              & 4          & 200            & 5.75       & 30418.71        & 38             &  & 125                             & 5          & 500            & 5.5        & 30022.97        & 42             \\
41          & 3          & 100            & 5.5        & 29319.41        & 36             &  & 83                              & 4          & 200            & 6          & 30403.79        & 38             &  & 126                             & 5          & 500            & 5.75       & 29992.16        & 42             \\
42          & 3          & 100            & 5.75       & 29264.81        & 36             &  & 84                              & 4          & 200            & 6.25       & 30388.13        & 37             &  & 127                             & 5          & 500            & 6          & 29957.95        & 42             \\
            &            &                &            &                 &                &  & 85                              & 4          & 300            & 5.5        & 30763.43        & 38             &  & 128                             & 5          & 500            & 6.25       & 29923.87        & 42            \\
\cmidrule(lr){1-6}\cmidrule(lr){7-13}\cmidrule(lr){14-20}
\end{tabular}}
\end{table}

Tables \ref{tab:results_CPSCLP_1}-\ref{tab:results_CPSCLP_3} report the results of four exact algorithms: Gurobi on the extended formulation (MIQP), Gurobi on the perspective reformulation (MISOCP), and two versions of our Benders algorithms directly on the perspective reformulation: a single-tree version using the $\epsilon$-technique (ST$\epsilon$-BEN), a multi-tree version using the $\epsilon$-technique (MT$\epsilon$-BEN). The metrics we use for the comparison are: i) the computational time (Time) expressed in seconds, ii) the relative gap at the end of the optimization (Gap), iii) the number (or the average number for the multi-tree approach) of branching nodes explored (Nodes) and iv) the number of Benders cuts generated on integral solutions (BCuts) and fractional solutions (FrBCuts). As for the Gap, the internal relative gap of Gurobi was used where available. For the single-tree method in the absence of feasible solutions and multi-tree method running out of time, the gap was computed as $ 100\frac{{\text{ObjVal} - \text{BestLB}}}{\text{ObjVal}}$, where ObjVal is the optimal value of the instance (i.e., it is the value reported under column ObjVal in Table \ref{tab:instances}), and BestLB is the best lower bound found by the method at hand for the same instance.

\begin{table}[h]
\caption{Performance of the exact procedures on the testbed (part I). Runs reaching the time limit of 900 seconds are indicated by “TL”.}\label{tab:results_CPSCLP_1}
\resizebox{\textwidth}{!}{
}
\end{table}

\end{document}